\documentclass[psamsfonts,12pt]{amsart}
\usepackage[margin=1in]{geometry} 
\usepackage{amsmath,amssymb,amsthm, amscd}
\usepackage{amssymb,fge}
\usepackage{tikz-cd}

\usepackage{color}

\newcommand\Q{\mathbb{Q}}



\def\p{{\mathfrak{p}}}

\def\q{{\mathfrak{q}}}

\DeclareMathOperator{\PGL}{PGL}
\DeclareMathOperator{\car}{char}

\DeclareMathOperator{\Sur}{Sur}

\DeclareMathOperator{\disc}{disc}

\DeclareMathOperator{\cf}{cf}
\newcommand{\mysetminus}{\mathbin{\fgebackslash}}

\newtheorem{theorem}{Theorem}[section]

\newtheorem{lemma}[theorem]{Lemma}
\newtheorem{proposition}[theorem]{Proposition}
\newtheorem{proposition-definition}[theorem]{Proposition-Definition}

\theoremstyle{definition}

\theoremstyle{remark}

\usepackage[margin=1in]{geometry}  
\usepackage{graphicx}              
\usepackage{amsmath}               
\usepackage{amsfonts}              
\usepackage{amsthm}                
\usepackage{verbatim}              
\usepackage{indentfirst}           
\usepackage{underscore}
\usepackage{enumitem}
\usepackage{relsize}
\usepackage{varwidth}
\usepackage{mathtools} 
\usepackage{hyperref}
\hypersetup{
    colorlinks=true,
    linkcolor=blue,
    filecolor=magenta,      
    urlcolor=cyan,
}
\urlstyle{same}
\usepackage{amssymb}
\usepackage[all]{xy}
\makeatletter
\renewcommand*\env@matrix[1][*\c@MaxMatrixCols c]{%
  \hskip -\arraycolsep
  \let\@ifnextchar\new@ifnextchar
  \array{#1}}
\makeatother



\theoremstyle{remark}
\newtheorem{rmk}{Remark}
\theoremstyle{remark}

\newcommand*{\Scale}[2][4]{\scalebox{#1}{$#2$}}%

\newcommand{\Mod}[1]{\ (\textup{mod}\ #1)}     

\newcommand{\sep}{\operatorname{sep}} 

\title[Riccati equations and dynamics over function fields]{Riccati equations and polynomial dynamics over function fields} 
\author[Wade Hindes]{Wade Hindes}
\address{Department of Mathematics, The Graduate Center, City University of New York (CUNY); 365 Fifth Avenue, New York, NY 10016, USA}
\author[Rafe Jones]{Rafe Jones}
\address{Department of Mathematics, Carleton College; One North College Street, Northfield, MN 55057, USA}
\begin{document}
\begin{abstract} 
Given a function field $K$ and $\phi \in K[x]$, we study two finiteness questions related to iteration of $\phi$: whether all but finitely many terms of an orbit of $\phi$ must possess a primitive prime divisor, and whether the Galois groups of iterates of $\phi$ must have finite index in their natural overgroup $\Aut(T_d)$, where $T_d$ is the infinite tree of iterated preimages of $0$ under $\phi$. We focus particularly on the case where $K$ has characteristic $p$, where far less is known. We resolve the first question in the affirmative under relatively weak hypotheses; interestingly, the main step in our proof is to rule out ``Riccati differential equations" in backwards orbits. We then apply our result on primitive prime divisors and adapt a method of Looper to produce a family of polynomials for which the second question has an affirmative answer; these are the first non-isotrivial examples of such polynomials. We also prove that almost all quadratic polynomials over $\Q(t)$ have iterates whose Galois group is all of $\Aut(T_d)$. 
\end{abstract}
\maketitle
\renewcommand{\thefootnote}{}
\footnote{2010 \emph{Mathematics Subject Classification}: Primary: 11R32, 37P15. Secondary: 14G05.}
\section{Introduction}
Let $K$ be a global field with ring of integers $\mathcal{O}_K$, let $V_K$ be a complete set of discrete valuations on $K$ (corresponding to valuation rings in $K$), and let $\phi(x)\in K(x)$ be a rational function of degree $d\geq2$. The map $\phi:\mathbb{P}^1\rightarrow\mathbb{P}^1$ and its iterates induce a discrete dynamical system on $\mathbb{P}^1$, and we let $\phi^n$ denote the $n$-fold iterate of $\phi$. 

Given a pair of points $a,b\in\mathbb{P}^1(K)$, one would like to know whether or not the $a$-shifted $n$th iterate of $b$,\, $\phi^n(b)-a$, has a primitive prime factor, that is, whether or not there is a prime dividing $\phi^n(b)-a$ that does not divide any lower order $a$-shifted iterates: $v\in V_K$ is called a \emph{primitive prime divisor} of $\phi^n(b)-a$ if \vspace{.05cm}
\[v(\phi^n(b)-a)>0\;\;\text{and}\;\; v(\phi^m(b)-a)=0\; \text{for all}\; 1\leq m\leq n-1.\] 
Of course, one expects that most terms in the sequence $\phi^n(b)-a$ have primitive prime divisors, and to measure the failure of this heuristic, we define the \emph{Zsigmondy set} of $\phi$ and the pair $(a,b)$ to be \vspace{.05cm} 
\[\mathcal{Z}(\phi,a,b):=\{n\;:\;\phi^n(b)-a\;\text{has no primitive prime divisors}\}.\]

Evidence suggests that $\mathcal{Z}(\phi,a,b)$ is finite, unless the tuple $(\phi,a,b)$ is special in some way; for instance, if $b$ has finite orbit under $\phi$, then $\mathcal{Z}(\phi,a,b)$ is infinite for arbitrary $a\in K$, and the same conclusion holds for $a=0$ and any $b$ in the case $\phi(x) = x^d$. If $K$ has characterisitc zero, then $\mathcal{Z}(\phi,a,b)$ is known to be finite in many cases \cite{Xander,Ingram, Ingram2,Holly}, with the strongest results coming over function fields \cite{ABCimplies}.

On the other hand, there are very few results known for fields of positive characteristic. Nonetheless, in \cite{Trans,primdiv} the first author was able to prove the finiteness of Zsigmondy sets for polynomials $\phi$ over $\mathbb{F}_p(t)$ whenever there exist a pair of integers $(\ell,m)$ such that the curve 
\begin{equation}{\label{curve}}
C_{\ell,m}(\phi): Y^\ell=\phi^m(X)
\end{equation} 
is non-isotrivial, i.e. not defined over a finite field after a change of variables. In particular, such a result holds for most quadratic polynomials \cite[Corollary 1.2]{Trans}; in this case, $C_{2,2}(\phi)$ is an elliptic curve, and one can explicitly compute a $j$-invariant to detect isotriviality. However, until now it was not known whether or not this technique was more broadly applicable. 

In this paper, we use some properties of Riccati differential equations to find such a non-isotrivial pair $(\ell,m)$, allowing us to to prove a fairly general primitive prime divisors theorem for polynomials. Recall that a Riccati differential equation is one of the form $y' = f_0 + f_1y + f_2y^2$, where $y$ is an unknown function and $f_0, f_1, f_2$ are specified functions. As an application of this method and some recent work of Looper \cite{Looper} on the Galois groups of iterated trinomials, we construct explicit examples of non-isotrivial polynomials over function fields having large image arboreal Galois representations. 

In what follows $t$ is an indeterminate and $K/k(t)$ is a finite separable extension. In particular, one can extend the usual derivative $\frac{d}{dt}$ on $k(t)$ to $K$ via implicit differentiation; moreover, we let $\beta'$ denote the derivative of $\beta\in K$. \vspace{.0075cm}
\begin{theorem}{\label{thm:main}} Let $K/\mathbb{F}_q(t)$ be a function field, let $\phi(x)\in K[x]$ have degree $d\geq3$, and write 
\[\phi(x)=A_0x^d+A_{1}x^{d-1}+\dots A_{d-1}x+A_d.\vspace{.075cm}\]
If $d\in K^*$ and the following quantities are non-zero, 
\begin{enumerate}[topsep=8pt, partopsep=8pt, itemsep=13pt]   
\item[\textup{(1)}] $\delta_\phi:=2dA_0A_{2}-(d-1)A_{1}^2$,    
\item[\textup{(2)}] $\epsilon_\phi:=(d-1)^2A_0A_1A_1' + d(d-3)A_0A_2A_0' - (d-1)(d-2)A_1^2A_0'-d(d-1)A_0^2A_2'$,    
\end{enumerate} 
then the dynamical Zsigmondy set \vspace{.1cm} 
\[\mathcal{Z}(\phi,a,b)=\{n\,:\, \phi^n(b)-a\;\, \text{has no primitive prime divisors}\}  \vspace{.1cm} \] 
is finite for all $\phi$-wandering pairs $a,b\in K$.  
\end{theorem}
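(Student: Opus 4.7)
The plan is to invoke the first author's earlier criterion (\cite{Trans,primdiv}), recalled in the introduction: $\mathcal{Z}(\phi, a, b)$ is finite for all wandering pairs whenever some superelliptic curve $C_{\ell, m}(\phi): Y^\ell = \phi^m(X)$ is non-isotrivial over $K$. So the theorem reduces to exhibiting such a pair $(\ell, m)$ from the hypotheses $\delta_\phi \neq 0$ and $\epsilon_\phi \neq 0$. I would argue by contradiction: assuming all the superelliptic covers are isotrivial, I would extract a Riccati differential equation along the backward orbit of $\phi$ and show that its existence forces $\delta_\phi = 0$ or $\epsilon_\phi = 0$.

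First I would interpret the hypotheses as obstructions to affine conjugation. Since $d \in K^*$, the translation $x \mapsto x - A_1/(dA_0)$ depresses $\phi$; a direct computation identifies the $x^{d-2}$-coefficient of the depressed form as $\delta_\phi/(2dA_0)$, so $\delta_\phi \neq 0$ records that the depression retains a subleading term. With the translation fixed, the only remaining affine symmetry is the scaling $y \mapsto \lambda y$, and demanding that both the leading and the subleading coefficients of the rescaled depressed polynomial have derivative zero (so that they lie in the constant field $\overline{\mathbb{F}}_q$) yields two expressions for $\lambda'/\lambda$. Their equality collapses, after clearing denominators, to the identity $(d-1) A_0 \delta_\phi' - 2(d-2) A_0' \delta_\phi = 2\epsilon_\phi$. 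Thus the conjunction $\delta_\phi \epsilon_\phi \neq 0$ asserts that no affine change of coordinates over any extension of $K$ conjugates $\phi$ into a polynomial whose top three coefficients all lie in $\overline{\mathbb{F}}_q$.

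The main step is then to show that simultaneous isotriviality of the covers $C_{\ell, m}(\phi)$ forces exactly such a conjugation to exist. Fix a generic basepoint $a$ with iterated preimages $\alpha_m \in \phi^{-m}(a)$; joint isotriviality provides a compatible system of conjugating maps in $\PGL_2(\overline{K})$ identifying each cover with one defined over $\overline{\mathbb{F}}_q$. Differentiating these isomorphisms with respect to $t$ and projecting onto the orbit should produce, for a generic preimage $\alpha_{m+1}$, a first-order ODE of the form $\alpha_{m+1}' = f_0 + f_1 \alpha_{m+1} + f_2 \alpha_{m+1}^2$ with $f_i \in K(a, a', \ldots)$ — this is the Riccati equation named in the title, and it arises because Riccati equations are exactly the ODEs associated with $\PGL_2$-orbits on $\mathbb{P}^1$. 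Finally, combining this Riccati identity with the chain-rule relation $\phi'(\alpha_{m+1}) \alpha_{m+1}' = \alpha_m' - \phi^{[t]}(\alpha_{m+1})$, where $\phi^{[t]}$ denotes the polynomial whose coefficients are $A_i'$, and comparing the coefficients of $\alpha_{m+1}^d$, $\alpha_{m+1}^{d-1}$, $\alpha_{m+1}^{d-2}$ forces polynomial-differential identities in $A_0, A_1, A_2, A_0', A_1', A_2'$ that collapse to $\delta_\phi = 0$ or $\epsilon_\phi = 0$, delivering the desired contradiction.

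The principal obstacle will be the extraction of the Riccati equation from the abstract joint-isotriviality hypothesis. One must analyze how the conjugating $\PGL_2$-maps for successive levels of the preimage tree fit together, verify that their $t$-derivatives restrict to the orbit in the expected quadratic fashion, and exclude the degenerate cases in which the Riccati could reduce to a linear ODE (where the hypothesis $d \geq 3$ should intervene, the $d = 2$ situation being covered by the prior quadratic case $C_{2,2}(\phi)$). Once the Riccati equation is in hand, the reduction to $\delta_\phi$ and $\epsilon_\phi$ is an explicit, if intricate, coefficient calculation that the very definitions of these quantities are engineered to carry out.
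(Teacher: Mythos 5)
Your overall architecture matches the paper's --- both arguments reduce the theorem to showing that some element of the backward orbit of $a$ fails to satisfy a Riccati equation over $K$, deduce non-isotriviality of a hyperelliptic cover from that failure, and identify $\delta_\phi$, $\epsilon_\phi$ as the obstructions --- but two essential steps are missing. First, your coefficient comparison (reading off the coefficients of $\alpha_{m+1}^d,\alpha_{m+1}^{d-1},\alpha_{m+1}^{d-2}$ in the identity obtained from the chain rule and the Riccati equations) is only valid if $\alpha_{m+1}$ satisfies no polynomial relation over $K$ of degree $\leq 2d$: substituting the Riccati expressions into the differentiated equation $\sum A_i\beta^{d-i}=\phi(\beta)$ produces a relation $P_{\phi,\beta}(\beta)=0$ with $\deg P_{\phi,\beta}\leq 2d$ (because of the $e\,\phi(\beta)^2$ term), and one needs $[K(\beta):K]>2d$ to conclude $P_{\phi,\beta}\equiv 0$. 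You never arrange this. The paper does so via Northcott over the global function field $\mathbb{F}_q(t)$: the infimum of positive canonical heights of points of degree $\leq 2d$ over $K$ is positive, so since $a$ is wandering, every separable $\beta\in\phi^{-m}(a)$ with $m$ large has $[K(\phi^i(\beta)):K]>2d$ for $i=0,1,2$ (separability of some preimage is where $d\in K^*$ enters). Without this, nothing can be extracted from the Riccati hypothesis.

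Second, a single chain-rule comparison does not ``collapse to $\delta_\phi=0$ or $\epsilon_\phi=0$.'' When $\delta_\phi\neq 0$, comparing coefficients merely shows the Riccati coefficients of $\beta$ and $\phi(\beta)$ are uniquely determined by $\phi$ and that the quadratic terms vanish (so the equations become linear --- that is a conclusion of the argument, not a degenerate case to be excluded as you suggest); no contradiction arises at one level. The contradiction with $\epsilon_\phi\neq 0$ comes only from running the argument on \emph{two consecutive links}, $(\beta,\phi(\beta))$ and $(\phi(\beta),\phi^2(\beta))$: uniqueness forces $\phi(\beta)'=f\phi(\beta)+g$ and $\phi(\beta)'=b\phi(\beta)+c$ simultaneously, whence $b=f$ because $\phi(\beta)\notin K$, and $\epsilon_\phi=(b-f)\delta_\phi=0$. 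Your sketch has no analogue of this two-level comparison. Finally, the implication you flag as the principal obstacle (isotrivial cover $\Rightarrow$ Riccati root) is handled in the paper not by differentiating a compatible system of $\PGL_2$-conjugations but by the cross-ratio argument of Kim--Thakur--Voloch applied to an \emph{irreducible} factor $f_1$ of $\phi^m(x)-a$ of degree $\geq 5$; one must then rerun the decomposition/effective-Mordell argument for the shifted, factored curve $Y^2=f_1(X)$ rather than citing the unshifted criterion for $Y^\ell=\phi^m(X)$.
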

Likewise, we prove a similar statement for arbitrary function fields. However, any assertions about the finiteness of $\mathcal{Z}(\phi,a,b)$ are superseded by \cite{ABCimplies} in characteristic zero. Nonetheless, we find an explicit $(\ell,m)$ such that (\ref{curve}) is non-isotrivial, a useful fact for effectively bounding Zsigmondy sets.\vspace{.0075cm}                
\begin{theorem}{\label{thm:dyniso}} Let $K/k(t)$ be an arbitrary function field, let $\phi(x)\in K[x]$ have degree $d\geq5$ with $d\in K^*$, and write 
\[\phi(x)=A_0x^d+A_{1}x^{d-1}+\dots A_{d-1}x+A_d.\]
If $\phi$ and $a\in K$ satisfy the following conditions:     
\begin{enumerate}[topsep=8pt, partopsep=6pt, itemsep=10pt] 
\item[\textup{(1)}] $\phi^3(x)-a$ is irreducible over $K$,    
\item[\textup{(2)}] $\delta_\phi\neq0$, 
\item[\textup{(3)}] $\epsilon_\phi\neq0$.   
\end{enumerate} 
Then at least one of the hyperelliptic curves \vspace{.15cm}
\[\;C_{(a,2,1)}(\phi): Y^2=\phi(X)-a, \;\;\;\, C_{(a,2,2)}(\phi): Y^2=\phi^2(X)-a,\,\;\;\;\text{or}\;\;\;\, C_{(a,2,3)}(\phi): Y^2=\phi^3(X)-a\vspace{.15cm}\]
is non-isotrivial. In particular, $\mathcal{Z}(\phi,a,b)$ is finite for all $\phi$-wandering points $b\in K$.         
\end{theorem}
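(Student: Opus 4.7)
The plan is to prove the first assertion by contradiction; the ``in particular'' clause then follows by invoking the Zsigmondy-set finiteness criterion of \cite{Trans,primdiv} applied to whichever $j \in \{1,2,3\}$ makes $C_{(a,2,j)}(\phi)$ non-isotrivial. So suppose, for contradiction, that all three curves are isotrivial over $K/k(t)$. Since $\phi^3(X)-a$ is irreducible and $d \geq 5$, each polynomial $\phi^j(X)-a$ has degree $d^j \geq 5$, so its hyperelliptic curve has genus $\geq 2$, and isotriviality is sufficiently rigid to give, for each $j=1,2,3$, parameters $\alpha_j,\beta_j \in \overline{K}$ and $\gamma_j \in \overline{K}^*$ with
\[\phi^j(\alpha_j X + \beta_j) - a \;=\; \gamma_j^{2}\, g_j(X), \qquad g_j \in \overline{k}[X].\]

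Next I would extend $d/dt$ to $\overline{K}$ and fix a root $\xi$ of $\phi^3(X)-a$. Setting $\eta = \phi(\xi)$ and $\zeta = \phi^2(\xi)$, these are roots of $\phi^2-a$ and $\phi-a$, respectively, sitting in the backward orbit of $a$. Differentiating each of the three identities above in $t$ --- and using $g_j \in \overline{k}[X]$ to kill the $t$-derivative on the right --- produces three formulas for $\xi',\eta',\zeta'$ as $\overline{K}$-rational expressions in the corresponding root and in the coefficients of $\phi$ and their $t$-derivatives. The chain-rule relations coming from $\eta = \phi(\xi)$ and $\zeta = \phi(\eta)$ link the three levels; eliminating $\eta',\zeta'$ and then reducing the resulting polynomial identity in $\xi$ modulo the minimal polynomial $\phi^3(X)-a$ collapses everything to a single differential equation of Riccati type
\[\xi' \;=\; F_0 + F_1 \xi + F_2 \xi^2, \qquad F_0, F_1, F_2 \in K,\]
which is precisely the ``Riccati equation in the backward orbit'' singled out in the abstract.

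Finally, I would substitute this Riccati equation into the $j=1$ identity from Step 1, written as $\phi(\zeta)-a = 0$ together with $\zeta' = F_0 + F_1\zeta + F_2\zeta^2$ (the analogous Riccati for $\zeta$), and then differentiate once more and compare the leading coefficients of the polynomial so obtained in $\zeta$. The hypothesis $d \in K^*$ allows the standard depression of $\phi$, and the top-degree obstruction is seen to be exactly a nonzero multiple of $\delta_\phi$ (coming from the $X^{d-2}$-coefficient of the depressed form of $\phi$); the next-to-top obstruction is a nonzero multiple of $\epsilon_\phi$ (coming from the $t$-derivative of the same coefficient). Both being forced to vanish contradicts the hypotheses and completes the proof.

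The main obstacle will be the Riccati extraction in the middle step: the raw output of combining the three isotriviality identities with the chain rule is a polynomial relation in $\xi$ of potentially high degree in each of $\xi,\eta,\zeta$, and one must argue carefully --- using both $d \geq 5$ and the irreducibility of $\phi^3(X)-a$ --- that reduction modulo the minimal polynomial really collapses this to a quadratic, rather than a higher-degree, differential equation. Once the Riccati form is in hand, the comparison that identifies $\delta_\phi$ and $\epsilon_\phi$ as the precise obstructions is a bounded, if somewhat involved, coefficient computation.
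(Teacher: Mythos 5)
Your overall strategy --- assume all three curves are isotrivial, extract Riccati-type differential equations in the backward orbit of $a$, and contradict $\delta_\phi\neq0$, $\epsilon_\phi\neq0$ --- matches the paper's, but the two steps you flag as delicate are precisely where your route diverges, and as sketched they contain real gaps. First, isotriviality of $C_{(a,2,j)}$ only produces an isomorphism over $\overline{K}$ covering a M\"obius transformation $\lambda\in\PGL_2(\overline{K})$ of the $X$-line (this is where $d\geq5$, hence genus $\geq2$, enters: the hyperelliptic map is unique up to automorphisms of $\mathbb{P}^1$). You cannot normalize $\lambda$ to be affine, so the identity $\phi^j(\alpha_jX+\beta_j)-a=\gamma_j^2g_j(X)$ is not available; and even granting it, differentiation yields a \emph{linear} equation with coefficients only in $\overline{K}$, not a Riccati equation over $K$. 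The paper's Lemma \ref{lem:RicattitoIso} handles this by passing to an even-degree model so that $\lambda$ carries the roots of $\phi^j(X)-a$ into $\overline{k}$, and then invoking the cross-ratio calculation of Kim--Thakur--Voloch to descend to a genuine Riccati equation over $K$.

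Second, and more importantly, the contradiction is not reached by collapsing the three levels into one Riccati equation for $\xi$ and differentiating again; no second derivatives occur. The engine is Lemma \ref{lem:unique}: if $\beta$ and $\phi(\beta)$ both satisfy Riccati equations and $[K(\beta):K]>2d$, then substituting into the derivative of $\phi(\beta)=A_0\beta^d+\cdots$ gives a polynomial of degree $\leq 2d$ vanishing at $\beta$, hence identically zero, and since $\delta_\phi$ is the determinant of the resulting linear system, the coefficients are forced to the specific values $(0,b,c)$ and $(0,f,g)$ determined by $\phi$ alone. Irreducibility of $\phi^3(x)-a$ gives $[K(\xi):K]=d^3>2d$ and $[K(\phi(\xi)):K]=d^2>2d$, so applying the lemma to the overlapping pairs $(\xi,\phi(\xi))$ and $(\phi(\xi),\phi^2(\xi))$ forces $\phi(\xi)'=f\phi(\xi)+g$ and $\phi(\xi)'=b\phi(\xi)+c$ simultaneously; as $\phi(\xi)\notin K$, this yields $b=f$, i.e. $\epsilon_\phi=(b-f)\delta_\phi=0$, the contradiction. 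Your proposed coefficient comparison would have to reconstruct exactly this uniqueness-plus-overlap argument, and nothing in the sketch locates $\epsilon_\phi$ (a specific combination of the $A_iA_jA_k'$) as the difference of two Riccati coefficients. A minor further point: the ``in particular'' clause is obtained by rerunning the Zsigmondy argument of Theorem \ref{thm:main} with $m\in\{1,2,3\}$ and $f_1=\phi^m$, not by citing \cite{Trans,primdiv}, which are stated over $\mathbb{F}_p(t)$.
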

\begin{rmk} In fact, we prove the stronger statement that sequence $\phi^n(b)-a$ has a primitive prime divisors appearing to odd valuation for all $n$ sufficiently large (c.f. \cite[Theorem 1]{primdiv}).  
\end{rmk} 
Our motivation for studying prime divisors in orbits comes from the theory of dynamical Galois groups. Namely, for certain types of polynomials (e.g. unicritical polynomials and some trinomials), primitive prime divisors in critical orbits control the image size of arboreal representations; see Section \ref{sec:Galois} for the relevant definitions and \cite{Jonessurvey} for an introduction to the subject. In particular, we use Theorem \ref{thm:main} and some results of Looper for trinomials \cite{Looper} to construct finite index arboreal representations, $G_K(\phi)\leq\Aut(T_d)$, in every characteristic. 
\begin{theorem}{\label{thm:eg}} Let $K=k(t)$ be a rational function field of any characteristic, and let 
\[\phi_{p,B}(x)=x^p+\bigg(\frac{-pB^p-pB}{pB^{p-1}+p-1}\bigg)\,x^{p-1}+ B\]  
for some non-constant $B\in K(t)$ and some prime $p\geq3$. If $p(p-1)\in K^{*}$, then the image of the arboreal representation of $\phi_p$, $G_K(\phi_{p,B})\leq\Aut(T_p)$, is a finite index subgroup.  \vspace{.05cm}  
\end{theorem}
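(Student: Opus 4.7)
The plan is to combine Theorem \ref{thm:main} (or the main result of \cite{ABCimplies} in characteristic zero) with Looper's trinomial criterion for large arboreal images \cite{Looper}. Write $\phi = \phi_{p,B}(x) = x^p + c\,x^{p-1} + B$ with $c = -p(B^p + B)/(pB^{p-1} + p - 1)$, and let $\gamma = -(p-1)c/p$ denote its non-zero critical point. The shape of $c$ is engineered so that $\phi(B) = \gamma$: solving the defining identity for $cB^{p-1}$ and substituting into $\phi(B) = B^p + cB^{p-1} + B$ yields $\phi(B) = -(p-1)c/p = \gamma$. Consequently $\phi^2(0) = \gamma$, so the forward orbit of $0$ coincides, from its third term on, with the critical orbit $\{\phi^n(\gamma)\}_{n \geq 0}$.

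With this reduction in hand, I would verify the hypotheses of Theorem \ref{thm:main} for $\phi_{p,B}$. Since $A_0 = 1$, $A_1 = c$, and $A_2 = 0$, direct substitution gives $\delta_\phi = -(p-1)c^2$ and $\epsilon_\phi = (p-1)^2\, c\, c'$. The numerator $pB(B^{p-1}+1)$ of $c$ is non-zero for any non-constant $B \in k(t)$, so $c \neq 0$. If $c$ were a constant in $k$, the defining identity would force $B$ to satisfy a non-trivial polynomial equation over $k$, contradicting $B \in k(t) \setminus k$; hence $c' \neq 0$. To see that $(0,0)$ is a $\phi$-wandering pair, let $v \in V_K$ correspond to a pole of $B$ of order $N > 0$; a short calculation yields $v(c) = -N$, $v(\phi^i(0)) = -N$ for $i \in \{1,2\}$, and $v(\phi^n(0)) = -p^{n-2} N$ for all $n \geq 3$, so the iterates of $0$ are pairwise distinct and non-zero. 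Theorem \ref{thm:main} (in positive characteristic) or \cite{ABCimplies} (in characteristic zero) then yields that $\mathcal{Z}(\phi, 0, 0)$ is finite, which via $\phi^{n+2}(0) = \phi^n(\gamma)$ is equivalent to finiteness of the critical-orbit Zsigmondy set of $\phi$.

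Finally I would invoke Looper's trinomial criterion from \cite{Looper}: for polynomials of the shape $x^p + c x^{p-1} + B$, finite index of $G_K(\phi) \leq \Aut(T_p)$ follows from irreducibility of every iterate $\phi^n(x)$ over $K$ together with finiteness of the critical-orbit Zsigmondy set. The Zsigmondy condition is precisely what the previous step establishes; iterate-irreducibility is verified by a Newton polygon / Eisenstein-type argument at a pole of $B$, made possible by the trinomial shape of $\phi_{p,B}$. The main obstacle is matching $\phi_{p,B}$ precisely to the hypotheses in Looper's theorem and confirming iterate-irreducibility uniformly in $n$ (this is where the non-isotrivial behavior of $B$ is essential, since isotrivial families can manufacture spurious factorizations of iterates); the remaining steps are routine coefficient and valuation bookkeeping.
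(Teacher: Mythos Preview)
Your overall strategy matches the paper's: establish the critical-orbit collision $\phi^2(0)=\gamma$, verify $\delta_\phi,\epsilon_\phi\neq0$, produce primitive primes in the critical orbit, and feed these into a Looper-style maximality criterion. The computations of $\delta_\phi$, $\epsilon_\phi$, and the valuations of $\phi^n(0)$ are correct. However, two steps do not go through as written.

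First, the paper begins with a reduction you omit: since $[k(t):k(B)]=\deg(B)<\infty$, finite index of $G_{k(B)}(\phi_{p,B})$ in $\Aut(T_p)$ already implies finite index of $G_{k(t)}(\phi_{p,B})$, so one may take $B=t$. This is what makes the irreducibility step clean: $\phi_{p,t}$ is Eisenstein at $\mathfrak{p}=(t)$, hence every iterate is irreducible. Your proposed ``Newton polygon at a pole of $B$'' does not yield irreducibility---at a pole with $v(B)=-N$ your own computation gives $v(c)=-N$, and the Newton polygon of $\phi$ then has vertices $(0,-N)$, $(p-1,-N)$, $(p,0)$, i.e.\ two segments. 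Working instead at a zero of $B$ requires $\gcd(v(B),p)=1$, which can fail (e.g.\ $B=t^p$). Relatedly, your argument that $c'\neq0$ only shows $c\notin k$; in characteristic $\ell>0$ one can have $c'=0$ with $c$ non-constant. After reducing to $B=t$ this becomes a direct computation.

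Second, mere finiteness of the Zsigmondy set is not the input to the maximality criterion. What Theorem~\ref{thm:dyntransposition} and Proposition~\ref{prop:maximal} require is, for all large $n$, a primitive prime $\mathfrak{p}_n$ of $\phi^n(\gamma)$ with $v_{\mathfrak{p}_n}(\phi^n(\gamma))$ \emph{odd} and with $v_{\mathfrak{p}_n}(\phi^m(0))=0$ for all $1\le m\le n$; the latter condition is exactly where the identity $\phi^n(0)=\phi^{n-2}(\gamma)$ gets used a second time. The odd-valuation statement is the stronger conclusion established inside the proofs of Theorems~\ref{thm:main} and~\ref{thm:dyniso} (see the remark following Theorem~\ref{thm:dyniso}), not Zsigmondy finiteness alone. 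Finally, the results of \cite{Looper} are formulated over number fields; Section~\ref{sec:Galois} of the paper extends them to Dedekind domains with perfect residue fields (Theorems~\ref{thm:transposition}, \ref{thm:dyntransposition}, Proposition~\ref{prop:maximal}), and that extension cannot simply be cited as a black box.
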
 \vspace{.1cm}
In 1985, Odoni conjectured that for each $d \geq 2$ there is a polynomial with integer coefficients whose arboreal representation has image as large as possible; see \cite[Conjecture 2.2]{Jonessurvey}). For isotrivial maps over function fields, it is known that Odoni's conjecture holds; see \cite[Theorem 3.1]{Juul}. However, the proof relies on special properties of these maps. Therefore, Theorem \ref{thm:eg} establishes a version of Odoni's conjecture (up to finite index) for non-isotrivial maps of prime degree defined over a function field. 

In the case of function fields of characteristic zero, we can prove stronger results, including surjectivity (i.e., index one) in some cases. There are several reasons for this improvement. First, isotriviality of the relevant hyperelliptic curves is not a problem in characteristic zero: effective height bounds are known for all hyperelliptic curves \cite{Mason,Schmidt}. Moreover, we have reduction maps $\mathbb{Z}[t]\rightarrow \mathbb{F}_p[t]$ for any prime $p$, and we can use these maps to reduce the complexity of the relevant dynamical factorization problems. In particular, we are able to prove that the arboreal representation of $\phi_{p,t}$ (i.e. $B=t$) is surjective by exploiting these properties. \vspace{.1cm}          
\begin{theorem}{\label{thm:Odoni}} Let $K=k(t)$ be a rational function field of characteristic zero, and let \vspace{.05cm} 
\[\phi_{p}(x)=x^p+\bigg(\frac{-pt^p-pt}{pt^{p-1}+p-1}\bigg)\,x^{p-1}+ t\vspace{.05cm}\]   
for some prime $p\geq3$. Then $G_K(\phi_{p})=\Aut(T_p)$, i.e., the arboreal representation of $\phi_p$ is surjective.    
\end{theorem}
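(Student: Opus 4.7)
The plan is to upgrade the finite-index conclusion of Theorem~\ref{thm:eg}, applied to $B=t$, to full surjectivity by verifying directly that the level-$n$ Galois groups are maximal for \emph{every} $n$. Three ingredients not available in positive characteristic carry the argument: reduction modulo primes of $\Z[t]$, the effective Mason--Schmidt height bounds for points on hyperelliptic curves, and Hilbert irreducibility / specialization.

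First, I would record the critical structure of $\phi_p$. The derivative factors as $\phi_p'(x) = x^{p-2}\bigl(px + (p-1)\tfrac{-pt^p-pt}{pt^{p-1}+p-1}\bigr)$, so the critical points are $0$ (of multiplicity $p-2$) and $\gamma := \frac{(p-1)(t^p+t)}{pt^{p-1}+p-1}$, and a direct computation gives $\phi_p(0) = t$ and $\phi_p(t) = \gamma$. Thus both critical points lie in a single forward orbit of $0$. By the Looper-type framework underlying Theorem~\ref{thm:eg}, the chain of inclusions $\Gal(\phi_p^n(x)/K) \hookrightarrow [S_p]^n$ is an equality at level $n$ precisely when (a) $\Gal(\phi_p(x)/K) = S_p$ and (b) $\phi_p^{n-1}(\gamma)$ contributes a primitive prime factor in $k[t]$ whose multiplicity is coprime to $p$; Theorem~\ref{thm:eg} yields (b) for all sufficiently large $n$, and surjectivity requires verifying both conditions for every $n$.

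Second, I would establish condition (a) by specialization. The polynomial $\phi_p(x)$ has coefficients in $\Z[t][1/(pt^{p-1}+p-1)]$, so I would specialize $t \mapsto t_0 \in \Z$ such that (i) $pt_0^{p-1}+p-1$ is coprime to two chosen primes $\ell_1, \ell_2$, (ii) $\phi_{p,t_0}(x) \in \Q[x]$ is Eisenstein at $\ell_1$, giving an irreducible specialization together with a $p$-cycle in its decomposition group, and (iii) the factorization of $\phi_{p,t_0}$ modulo $\ell_2$ has cycle type $(p-1,1)$, giving a transposition via Dedekind's theorem. Since $p$ is prime, transitivity plus a transposition forces $\Gal(\phi_{p,t_0}/\Q) = S_p$; and because specialization can only shrink the Galois group, $\Gal(\phi_p/K) \supseteq S_p$, whence equality. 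Third, for condition (b) at each $n \geq 2$, I would apply the characteristic-zero Zsigmondy finiteness of \cite{ABCimplies} (which supersedes Theorem~\ref{thm:main} here) to the orbit of $\gamma$, noting that $\gamma$ is $\phi_p$-wandering since $\phi_p$ is manifestly non-isotrivial for $B=t$. This gives an effective bound $N$ on the Zsigmondy exceptional set via Mason--Schmidt; for $n \leq N$, I would verify by direct factorization in $\Q[t]$ (or via reduction at a prime of good reduction, where the Newton polygon exhibits the desired new prime factor) that $\phi_p^n(\gamma)$ contains a primitive prime divisor to odd/coprime-to-$p$ multiplicity, exhausting the exceptional set.

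The main obstacle will be step two: producing a specialization $t_0$ and primes $\ell_1, \ell_2$ that satisfy the Eisenstein and transposition conditions uniformly in $p$. The middle coefficient $-(pt^p+pt)/(pt^{p-1}+p-1)$ depends on $p$ in an arithmetically rigid way, so clearing denominators and simultaneously satisfying both congruence conditions for every prime $p \geq 3$ is delicate. A natural attempt is to take $t_0$ to be a prime, engineer the Eisenstein condition at $\ell_1 = t_0$ (observing that $\phi_{p,t_0}(x) \equiv x^p \pmod{t_0}$ modulo cleared denominators), and find a small separate $\ell_2$ yielding a transposition; verifying that such a choice works for every $p$ is the technical heart of the proof.
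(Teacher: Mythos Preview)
Your outline misses the key idea and leaves a gap that cannot be closed uniformly in $p$. The paper's proof uses neither Mason--Schmidt nor a specialization argument for level one; it does not split into ``large $n$ by effective Zsigmondy'' plus ``small $n$ by direct check.'' Instead it clears denominators, writing $\phi_p^n(\gamma)=a_n/(pt^{p-1}+p-1)^{p^n}$ with $a_n\in\Z[t]$, and observes that the reduction of $a_n$ modulo $p$ is (up to sign) the iterated polynomial $(((t^p+t)^p+t)^p+\cdots)^p+t\in\F_p[t]$, whose $t$-derivative is identically $1$. Hence every $a_n$ is square-free in $\Z[t]$. Given this, if some level failed to be maximal then $a_n$ itself would have to be supported entirely on primes dividing $a_1,\ldots,a_{n-1},t,\gamma$, and the elementary degree count
\[
p^{n+1}=\deg a_n\;\leq\;\sum_{m=1}^{n-1}\deg a_m+\deg t+\deg\gamma\;=\;\frac{p^{n+1}-1}{p-1}
\]
is already a contradiction. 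This handles every $n\geq2$ at once; no hyperelliptic height bound enters.

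The gap in your plan is step three: the Zsigmondy bound $N$ furnished by Mason--Schmidt or \cite{ABCimplies} depends on the degree of $\phi_p$ and the heights of its coefficients, hence on $p$. So ``direct factorization for $n\leq N$'' is not a finite verification but an infinite family of computations indexed by primes $p\geq3$, and you offer no mechanism to carry them out uniformly. You list reduction modulo primes of $\Z[t]$ among your three ingredients but never deploy it in the decisive way above; the mod-$p$ square-free trick is exactly the uniform-in-$p$ replacement for your finite check, and once you have it the specialization strategy for level one and the effective height bounds both become unnecessary. (Two smaller points: the valuation condition in Theorem~\ref{thm:dyntransposition} is that $v_{\p_n}(\phi^n(\gamma))$ be \emph{odd}, not merely coprime to $p$; and your indexing $\phi_p^{n-1}(\gamma)$ is off by one relative to the framework of Theorem~\ref{thm:dyntransposition} and Proposition~\ref{prop:maximal}.)
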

Finally, although it is in general quite difficult to prove surjective (and finite index) results for arboreal representations, we can nonetheless show that ``most" quadratic polynomials furnish surjective representations over function fields in characteristic zero. To make a version of this statement precise, we fix some notation. For a polynomial $f(t)\in\mathbb{Z}[t]$, write 
\[ f(t)=a_dt^d+a_{d-1}t^{d-1}+\dots +a_0 \;\;\;\;\text{for}\; a_i\in\mathbb{Z}\] 
and define $h_{\cf}(f)=\max\{|a_i|\}$ to be the maximum absolute value of the coefficients of $f$. Moreover, for positive integers $d\geq1$ and $B\geq0$, let \vspace{.05cm} 
\[\mathcal{P}_d(B):=\{f\in\mathbb{Z}[t]:\,\deg(f)\leq d, \; h_{\cf}(f)\leq B\} \vspace{.05cm} \] 
be the the set of integral polynomials of degree at most $d$ and coefficients of absolute value at most $B$. It is clear that $\mathcal{P}_d(B)$ is a finite set. To every pair $\gamma(t), c(t)\in\mathbb{Z}[t]$, we can assign a quadratic polynomial over the rational function field $K=\mathbb{Q}(t)$, given by 
\[\phi_{(\gamma,c)}(x):=(x-\gamma)^2+c.\]
Now, consider the set  
\[\Sur_d(B):=\big\{(\gamma,c)\in\mathcal{P}_d(B)\times\mathcal{P}_d(B):\,G_{K}(\phi_{\gamma,c})=\Aut(T_2) \big\},\vspace{.05cm}\]
of all quadratic polynomials of this form having surjective arboreal representations over $K$. In particular, we show that for all \emph{fixed} $d\geq1$, the set $\Sur_d(B)$ approaches full asymptotic density in the set of all quadratic polynomials as $B$ tends to infinity. The main tool that we use to establish this result is the uniform finite index theorem for quadratic polynomials established by the first author in \cite{uniform}.     
\begin{theorem}{\label{thm:count}} For all $d\geq1$ and $B\geq0$, 
\[\#\Sur_d(B)=(2B+1)^{(2d+2)}+O(B^{2d+2-1/2}\log B).\]
In particular, 
\[\lim_{B\rightarrow\infty}\;\frac{\#\Sur_d(B)}{\#\,\mathcal{P}_d(B)\times\mathcal{P}_d(B)}=1.\vspace{.1cm}\] 
Hence, almost all quadratic polynomials of the form $\phi_{(\gamma,c)}(x):=(x-\gamma)^2+c$, given by $\gamma(t),c(t)\in\mathbb{Z}[t]$ and $\deg(\gamma),\deg(c)\leq d$, have surjective arboreal representations over $\mathbb{Q}(t)$. 
\end{theorem}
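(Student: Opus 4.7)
The plan is to combine the uniform finite index theorem for quadratic polynomials over $\mathbb{Q}(t)$ established by the first author in \cite{uniform} with elementary density estimates for polynomial squares in $\mathbb{Z}[t]$. First, I would invoke the uniform theorem to produce an integer $N = N(d)$, depending only on $d$, with the following property: for every pair $(\gamma, c) \in \mathcal{P}_d(B)^2$ one has $G_K(\phi_{(\gamma, c)}) = \Aut(T_2)$ if and only if a finite list of Stoll-type non-square conditions---essentially, that each critical-orbit value $\phi_{(\gamma, c)}^n(\gamma)$ for $1 \leq n \leq N$ (up to a natural sign or shift adjustment) fails to be a square in $\mathbb{Q}(t)$---all hold. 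It then suffices, by a union bound, to control the number of bad pairs at each level $n$ separately.

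The case $n = 1$ is direct. Since $\phi_{(\gamma, c)}(\gamma) = c$, the $n = 1$ condition fails precisely when $c \in \mathbb{Z}[t]$ is a square. By Gauss's Lemma such a $c$ equals $f^2$ for some $f \in \mathbb{Z}[t]$ with $\deg f \leq d/2$, and a leading-coefficient analysis shows $\|f\|_\infty \ll_d B^{1/2}$. Hence the number of squares in $\mathcal{P}_d(B)$ is $O(B^{(d+2)/2})$, which when paired with the $(2B+1)^{d+1}$ free choices of $\gamma$ gives $O(B^{3d/2 + 2})$ bad pairs at level one---well within the error budget $O(B^{2d + 3/2} \log B)$ since $d \geq 1$.

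The main obstacle lies in levels $n \geq 2$, where the square condition on $\phi_{(\gamma, c)}^n(\gamma)$ cuts out a proper subvariety $V_n$ of the $(2d+2)$-dimensional affine space of coefficient pairs $(\gamma, c)$. My approach would be as follows. For $n = 2$, I would use the explicit parametrization $c = AB$ and $\gamma = AB + (A - B)/2$ arising from the identity $\phi_{(\gamma,c)}^2(\gamma) = (c - \gamma)^2 + c = Q^2 \iff c = (Q - (c-\gamma))(Q + (c-\gamma))$, reducing the count to a polynomial divisor-function sum over factorizations $c = AB$ with $A, B \in \mathbb{Z}[t]$. This yields $O(B^{d + 1} (\log B)^{O(1)})$ bad pairs at level $n = 2$. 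For each $n \geq 3$, the polynomial $\phi_{(\gamma, c)}^n(\gamma)$ has degree $2^{n-1} d$ in $t$, so the square condition becomes overdetermined in the $2d + 2$ coefficient unknowns; the subvariety $V_n$ is thus a proper subvariety of bounded degree (depending on $n, d$), and the number of integer points in $V_n \cap [-B, B]^{2d+2}$ is $O_d(B^{2d + 1})$ by the standard trivial bound on integer points in a codimension-one hypersurface.

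Summing these contributions over $1 \leq n \leq N$ and absorbing all polylogarithmic factors into a single $\log B$ yields the claimed estimate $\#\Sur_d(B) = (2B+1)^{2d + 2} + O(B^{2d + 3/2} \log B)$, from which the asymptotic density statement follows immediately by dividing through by $\#(\mathcal{P}_d(B) \times \mathcal{P}_d(B)) = (2B+1)^{2d+2}$ and letting $B \to \infty$.
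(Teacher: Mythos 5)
There is a genuine gap, and it lies in your very first reduction. You assert that the uniform theorem of \cite{uniform} makes $G_K(\phi_{(\gamma,c)})=\Aut(T_2)$ \emph{equivalent} to finitely many conditions of the form ``$\phi_{(\gamma,c)}^n(\gamma)$ is not a square in $\mathbb{Q}(t)$'' for $n\leq N(d)$. That is not what the criterion says, and the ``if'' direction --- the only direction your count actually needs --- fails. By the Stoll--Jones criterion, maximality of $\Gal(K_n/K_{n-1})$ requires that $\phi^n(\gamma)$ not become a square in the field $K_{n-1}$, not merely in $\mathbb{Q}(t)$; an element can be a non-square over the base and still become a square upstairs. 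The standard way to rule this out from data over $K$ alone is to produce a primitive prime divisor of $\phi^n(\gamma)$ appearing to \emph{odd valuation}, which is strictly stronger than non-squareness. This is exactly how the paper proceeds: it cuts out the thin set where the discriminant $\disc(\phi^n_{(\gamma,c)}(\gamma))$ vanishes for some $n\leq 16$ (i.e.\ where $\phi^n(\gamma)$ fails to be \emph{square-free}), together with a leading-coefficient condition $a_db_d(a_d-b_d)\neq 0$; on the complement it proves stability and odd-valuation primitive prime divisors for all $n\geq 2$ by combining the square-freeness at low levels with effective Mason--Schmidt height bounds for integral points on the twists $E_\phi^{(d_n)}:Y^2=d_n(X-c)\phi(X)$, which force any failure down to $n\leq 16$. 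Your non-square conditions cannot be substituted for this without the height-bound machinery, so your union bound counts the wrong bad set.

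Two further points. First, your counts for $n\geq 3$ presuppose that each bad locus is a \emph{proper} subvariety of $\mathbb{A}^{2d+2}$; ``overdetermined'' is a heuristic, not a proof, and one must exhibit a witness pair outside all the bad loci simultaneously. The paper does this with the explicit pair $(\gamma,c)=(t^d,2t^d+t)$ and a reduction-mod-$2$ derivative argument showing every $\phi^n(\gamma)$ is square-free. Second, once the bad set is recognized as a union of finitely many hypersurfaces (discriminant loci), the paper obtains the stated error term $O(B^{2d+2-1/2}\log B)$ in one stroke from the large sieve for thin sets \cite[Theorem 3.4.4]{Serre-Galois}, rather than by level-by-level divisor-sum and lattice-point estimates; your per-level bounds, even if each were justified, would still need the correct (square-freeness) conditions to count the right thing.
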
 
\vspace{.25cm}
\indent \textbf{Acknowledgements:} It is a pleasure to thank Felipe Voloch for several useful conversations throughout this work, especially those related to Lemma \ref{lem:RicattitoIso}.  
\section{Riccati equations in orbits and dynamical Zsigmondy sets}
In this section, we prove an isotriviality test for the curves in (\ref{curve}) involving a differential equation on the roots of its defining polynomial. This type of argument was developed recently by the first author to prove a version of Silverman's dynamical integral point theorem \cite{Silv-Int} over function fields; see \cite[Theorem 1.1]{IntOrb}. In particular, our goal is to show that iterated preimages (of most basepoints under most polynomials) eventually avoid Riccati equations, which is a consequence of the following fundamental lemma (c.f. \cite[Lemma 2.3]{IntOrb}). In what follows, $K^{\sep}$ denotes the separable closure of $K$. 
\begin{lemma}{\label{lem:unique}} Let $K/k(t)$, let $\phi(x)\in K[x]$ have degree $d\geq3$, and write \vspace{.025cm} 
\[\phi(x)=A_0x^d+A_{1}x^{d-1}+\dots A_{d-1}x+A_d.\vspace{.075cm}\]    
If $d\in K^*$ and the quantity \[\delta_\phi:=2dA_0A_{2}-(d-1)A_{1}^2\vspace{.15cm}\]
is non-zero, then for all $\beta\in K^{\sep}$ such that $\beta$ and $\phi(\beta)$ both satisfy a Riccati equation, i.e.\vspace{.1cm} 
\begin{equation}{\label{Ricatti}} \beta'=a\beta^2+b\beta+c\;\;\;\text{and}\;\;\;\phi(\beta)'=e\phi(\beta)^2+f\phi(\beta)+g\vspace{.1cm}
\end{equation} 
for some $a,b,c,e,f,g\in K$, either \[[K(\beta):K]\leq2d,\] or the coefficients in (\ref{Ricatti}) are uniquely determined by $\phi$:  \vspace{.15cm} 
\begin{equation}{\label{solutions}}
\begin{split} 
&\,a=0,\;\;\;\; b=(dA_0^2A_2' - (d-1)A_0A_1A_1' - dA_0A_2A_0' + (d-1)A_1^2A_0')/\delta_\phi ,\\[8pt] 
&\,e=0,\;\;\; f=(d^2A_0^2A_2' - d(d-1)A_0A_1A_1' - d(d-2)A_0A_2A_0' + (d(d-2)+1)A_1^2A_0')/\delta_\phi, \\[8pt] 
&\,c=(A_0A_1A_2' - 2A_0A_2A_1' + A_1A_2A_0')/\delta_\phi, \;\;\; g=A_{d-1}c-A_df+A_d'. \\[3pt]  
\end{split}    
\end{equation} 
\end{lemma}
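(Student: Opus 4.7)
The strategy is to exploit the pair of assumed Riccati equations by computing $\phi(\beta)'$ in two different ways and comparing the resulting polynomial identities in $\beta$. Differentiating $\phi(\beta) = \sum_{i=0}^d A_i \beta^{d-i}$ via the chain rule and substituting $\beta' = a\beta^2 + b\beta + c$ gives
\[
\phi(\beta)' = \sum_{i=0}^d A_i' \beta^{d-i} + (a\beta^2 + b\beta + c)\sum_{i=0}^{d-1} (d-i) A_i \beta^{d-i-1},
\]
a polynomial in $\beta$ over $K$ of degree at most $d+1$. The hypothesis $\phi(\beta)' = e\phi(\beta)^2 + f\phi(\beta) + g$ is, when expanded in $\beta$, a polynomial of degree at most $2d$. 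Subtracting yields $P(\beta) = 0$ for some $P \in K[x]$ of degree at most $2d$.

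If $P \not\equiv 0$, then $\beta$ is a root of a nonzero polynomial of degree $\leq 2d$ over $K$, so $[K(\beta):K] \leq 2d$, which is the first alternative of the lemma. Otherwise $P$ vanishes identically and we equate coefficients power by power. Since $d \geq 3$, the coefficient of $\beta^{2d}$ is $eA_0^2$ (contributed only by $e\phi(\beta)^2$), so $A_0 \neq 0$ forces $e = 0$; the right-hand side then has degree at most $d$, so the coefficient of $\beta^{d+1}$, equal to $daA_0$, forces $a = 0$ (using $d \in K^*$). With $a = e = 0$, the coefficients of $\beta^d$, $\beta^{d-1}$, and $\beta^{d-2}$ produce three linear equations in the unknowns $b$, $c$, $f$. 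The $\beta^d$ equation reads $A_0' + bdA_0 = fA_0$, so $f = bd + A_0'/A_0$. Substituting this into the other two yields a $2\times 2$ linear system
\[
\begin{pmatrix} A_0A_1 & -dA_0^2 \\ 2A_0A_2 & -(d-1)A_0A_1 \end{pmatrix}\begin{pmatrix} b \\ c \end{pmatrix} = \begin{pmatrix} A_0A_1' - A_0'A_1 \\ A_0A_2' - A_0'A_2 \end{pmatrix},
\]
whose coefficient determinant equals $A_0^2\,\delta_\phi$. Since $\delta_\phi \neq 0$, Cramer's rule determines $b$ and $c$ uniquely, and hence $f$; matching the constant term then gives $g = A_d' + cA_{d-1} - fA_d$, recovering the displayed closed forms.

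The main technical hurdle is the coefficient bookkeeping: tracking precisely which monomials in $\beta$ pick up contributions from $A_i'\beta^{d-i}$, $b(d-i)A_i\beta^{d-i}$, and $c(d-i)A_i\beta^{d-i-1}$ on the left, and identifying the resulting $2\times 2$ minor of the coefficient matrix as the scalar multiple $A_0^2\,\delta_\phi$ of $\delta_\phi$. Once the degrees are sorted out correctly, the lemma reduces to a routine application of Cramer's rule, with $\delta_\phi$ emerging naturally as the obstruction to uniqueness.
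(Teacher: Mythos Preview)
Your proposal is correct and follows essentially the same approach as the paper: differentiate $\phi(\beta)$, substitute both Riccati relations, and observe that the resulting polynomial $P\in K[x]$ of degree at most $2d$ must vanish identically when $[K(\beta):K]>2d$, after which the top coefficients force $e=a=0$ and the next three give a linear system with determinant (a nonzero multiple of) $\delta_\phi$. The only cosmetic difference is that the paper keeps the three equations in $(b,f,c)$ as a $3\times 3$ system with determinant $A_0\delta_\phi$, whereas you first eliminate $f$ via the $\beta^d$-equation and solve a $2\times 2$ system with determinant $A_0^2\delta_\phi$; the content is identical.
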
 
\begin{proof} Assume that $\beta\in K^{\sep}$ is such that $\beta$ and $\phi(\beta)$ both satisfy a Riccati equation. Then, after differentiating the expression $A_0\beta^d+A_1\beta^{d-1}+\dots A_{d-1}\beta+ A_d=\phi(\beta)$, we obtain: \vspace{.1cm} 
\begin{equation}{\label{deriv}}
F_1(\beta)\beta'+F_2(\beta)=\phi(\beta)'. \vspace{.1cm}
\end{equation} 
Here the polynomials $F_1$ and $F_2$, associated to $\phi$, are given by \vspace{.05cm}
\[F_1(x)=dA_0x^{d-1}+(d-1)A_{1}x^{d-2}+\dots+ A_{d-1}\;\;\;\text{and}\;\;\;\; F_2(x)=A_0'x^d+\dots+A_{d-1}'x+ A_d'. \vspace{.05cm}\]
In particular, we substitute (\ref{Ricatti}) into (\ref{deriv}) and compute that \vspace{.05cm}
\begin{equation*}
F_1(\beta)(a\beta^2+b\beta+c)+F_2(\beta)=e{\phi(\beta)}^2+f\phi(\beta)+g. 
\end{equation*}
Hence, we obtain a polynomial $P_{\phi,\beta}\in K[x]$ satisfying 
\[\deg(P_{\phi,\beta})\leq2d\;\;\;\text{and}\;\;\;P_{\phi,\beta}(\beta)=0.\]
Therefore, if $[K(\beta):K]>2d$, then $P_{\phi,\beta}$ must be the zero polynomial. In particular, since 
\[P_{\phi,\beta}(x)=e(A_0x^d+O(x^{d-1}))^2-(dA_0)ax^{d+1}+O(x^d),\]
we see immediately that $e=0$ and $a=0$; here we use that $2d>d+1$ and $d\in K^*$. Consequently, \vspace{.1cm}   
\begin{equation}\label{poly}
\begin{split} 
P_{\phi,\beta}(x)=&\big(dA_0b-A_0f+A_0'\big)\,x^d+\big((d-1)A_1b-A_1f+dA_0c+A_1'\big)\,x^{d-1}\;\;+\\[5pt]
&\big((d-2)A_2b-A_2f+(d-1)A_1c+A_2'\big)\,x^{d-2}+O(x^{d-3}).
\end{split} 
\end{equation} 
Therefore, the vector $(b,f,c)\in K^3$ is a solution to the linear system of equations:  
\begin{equation}{\label{matrix}}
\begin{pmatrix}
dA_0&-A_0&0\\
(d-1)A_1&-A_1&dA_0\\
(d-2)A_2&-A_2&(d-1)A_1 
\end{pmatrix}
\begin{pmatrix}
\textbf{x}_1\\
\textbf{x}_2\\
\textbf{x}_3 
\end{pmatrix}
=
\begin{pmatrix}
A_0'\\
A_1'\\
A_2' 
\end{pmatrix}
\end{equation}
Moreover, since the determinant $\delta_\phi=2dA_0A_{2}-(d-1)A_{1}^2$ of the coefficient matrix in (\ref{matrix}) is non-zero, the Riccati coefficients $a,b,c,e,f,g\in K$ in (\ref{Ricatti}) are uniquely determined by $\phi$; note that the constant term of $P_{\phi,\beta}$ is $A_{d-1}c-A_df+A_d'-g=0$, and so $g$ is determined by $c$ and $f$. In particular, we obtain the description of the Riccati coefficients $(a,b,c,e,f,g)$ in (\ref{solutions}) by solving the linear system in (\ref{matrix}).                          
\end{proof} 
We now relate Riccati equations to the curves in (\ref{curve}).  
\begin{lemma}{\label{lem:RicattitoIso}} Let $K/k(t)$ and suppose that $\rho(x)\in K[x]$ is an irreducible (and separable) polynomial of degree $d\geq5$. If $\beta\in K^{\sep}$ is such that $\rho(\beta)=0$ and $\beta$ does not satisfy a Riccati equation over $K$, i.e. 
\[\beta'\neq a\beta^2+b\beta+c\]
for all $a,b,c\in K$, then the hyperelliptic curve\, $C: Y^2=\rho(X)$ is non-isotrivial.   
\end{lemma}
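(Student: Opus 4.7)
The plan is to prove the contrapositive: assuming $C\colon Y^2 = \rho(X)$ is isotrivial, I will produce a Riccati equation satisfied by $\beta$ over $K$, contradicting the hypothesis.

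Since $d \geq 5$ the curve $C$ has genus $g = \lfloor (d-1)/2 \rfloor \geq 2$, so the hyperelliptic involution is central in $\mathrm{Aut}(C_{\bar K})$. Consequently any isomorphism $C_{\bar K} \xrightarrow{\sim} (C_0)_{\bar K}$ (with $C_0\colon W^2 = \tilde\rho(Z)$ defined over the constant field $\bar k$) descends to an isomorphism of the hyperelliptic quotients, i.e.\ a M\"obius transformation $\phi(Z) = (\alpha Z + \eta)/(\gamma Z + \delta) \in \PGL_2(\bar K)$ sending the branch locus of $C_0$ to that of $C$. In particular, each root $\beta_i$ of $\rho$ equals $\phi(T_i)$ for some constant $T_i \in \bar k$.

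Next, I would invert the M\"obius relation to obtain $T_i = (\eta - \delta \beta_i)/(\gamma \beta_i - \alpha)$ and differentiate $\beta_i = \phi(T_i)$ using $T_i' = 0$. A direct computation, exactly analogous to the substitution underlying Lemma \ref{lem:unique}, yields
\[
\beta_i' = A \beta_i^2 + B \beta_i + C \qquad (i = 1, \ldots, d),
\]
where the triple $(A, B, C) \in \bar K^3$ is built explicitly from $\alpha, \eta, \gamma, \delta$ and their derivatives, and, crucially, is \emph{independent of $i$}.

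It remains to descend $(A, B, C)$ from $\bar K$ to $K$. Because $\rho$ is separable and irreducible of degree $d \geq 5$, its roots $\beta_1, \ldots, \beta_d$ are pairwise distinct elements of $K^{\sep}$. The $d$ equations above form a consistent linear system in the unknowns $(A, B, C)$ whose coefficient matrix has rows $(\beta_i^2, \beta_i, 1)$; any three of these rows form a $3 \times 3$ Vandermonde of full rank, so the solution is unique and Cramer's rule places it in $K^{\sep}$. Since the canonical derivation $d/dt$ on $K^{\sep}$ commutes with $\mathrm{Gal}(K^{\sep}/K)$ (as one sees by implicitly differentiating the minimal polynomial of $\beta$), any Galois automorphism permutes the equations of the system and hence fixes the unique solution. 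Thus $(A, B, C) \in K^3$, yielding the forbidden Riccati equation $\beta' = A\beta^2 + B\beta + C$. The main technical obstacle is precisely this descent step: turning the $\bar K$-phenomenon of isotriviality into Riccati coefficients over the ground field $K$. The hypothesis $d \geq 5$ is used twice over — to invoke genus-$\geq 2$ rigidity so that isotriviality produces an honest M\"obius transformation, and to supply enough conjugate roots for the Vandermonde descent to pin $(A, B, C)$ down uniquely.
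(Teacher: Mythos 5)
Your proof is correct and shares its first half with the paper's argument: both use the genus-$\geq 2$ rigidity of the hyperelliptic map (uniqueness of $x:C\to\mathbb{P}^1$ up to $\PGL_2$) to convert isotriviality of $C$ into a M\"obius transformation $\lambda$ carrying a branch locus contained in $\overline{k}\cup\{\infty\}$ onto the roots of $\rho$. Where you diverge is in extracting the Riccati equation. The paper passes to the affine curve $Y=\mathbb{P}^1\mysetminus\{\rho=0\}$, observes that $\lambda$ preserves cross-ratios so that all cross-ratios of conjugates of $\beta$ are constant, and then cites Voloch et al.\ for the implication ``constant cross-ratios $\Rightarrow$ Riccati over $K$.'' You instead differentiate $\beta_i=\lambda(T_i)$ directly and descend the coefficients by a Vandermonde/Galois argument; this is self-contained and, in my view, cleaner, since the cited claim is itself proved by essentially your computation. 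Two details deserve a sentence each. First, when $d$ (or $\deg\tilde\rho$) is odd, $\infty$ is a branch point, so one of your $T_i$ may be $\infty$ or $\lambda$ may send some finite $T_i$ to $\infty$; the paper handles this by passing to an even-degree model of $C_0$, and in your setup one checks directly that the relation $(\alpha\delta-\eta\gamma)\beta'=(\delta'\gamma-\delta\gamma')\beta^2+\cdots$ persists at $T=\infty$, so nothing breaks, but you should say so. Second, the M\"obius coefficients a priori lie in $\overline{K}$, where in characteristic $p$ the derivation $d/dt$ need not even extend; your Vandermonde step rescues this (the system $\beta_i'=A\beta_i^2+B\beta_i+C$ has all data in $K^{\sep}$ and full rank, so consistency over any extension forces the unique solution into $K^{\sep}$, and Galois equivariance of the derivation then puts it in $K$), but to know the system is consistent you should note that $\lambda$ is determined by three point--value pairs from $\overline{k}\cup\{\infty\}$ and $K^{\sep}$, hence can be taken in $\PGL_2(K^{\sep})$ when $k$ is perfect (the only case used). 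With those two remarks added, your argument is a complete and valid alternative to the paper's.
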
 
\begin{proof} Consider the affine curve \[Y:=\mathbb{P}^1\mysetminus\big\{\beta\in K^{\sep}\,:\, \rho(\beta)=0\big\}.\]
We show first that $Y$ is non-isotrivial (whenever $\beta$ is non-Riccati) and then show that this implies that $C$ is non-isotrivial. Suppose for a contradiction that $Y$ is isotrivial. Then $Y$ is isomorphic to an affine curve $Y'$ defined over $\overline{k}$. Hence, $Y'$ is an open subset of $\mathbb{P}^1$, and its complement is a finite set of points defined over $\overline{k}$. Moreover, the corresponding map $Y\rightarrow Y'$ must be a linear fractional transformation. In particular, since such maps preserve cross ratios, we see that the cross ratio of any four conjugates of $\beta\in K^{\sep}$ with $\rho(\beta)=0$ must be in a finite extension of the constant field. Consequently, since the derivative of any element of $\overline{k}$ is zero, a straightforward calculation \cite[\S7 Claim 1]{Voloch} shows that $\beta$ must satisfy a Riccati equation over $K$, a contradiction. Therefore, $Y$ is non-isotrivial.      

Because $d\geq5$, and hence the genus of $C$ is at least $2$, the hyperelliptic map $x:C\rightarrow\mathbb{P}^1$ is unique up to automorphisms of $\mathbb{P}^1$; see, for instance, \cite[IV Proposition 5.3]{Hartshorne}. In particular, if $C$ is isotrivial, i.e. isomorphic (over $\overline{K}$) to a curve $C'$ defined over an extension $k'/k$ of the constant field, then $C': v^2=f(u)$ for some $f\in k'[u]$ and the isomorphism $\theta:C\rightarrow C'$ must fit in a commutative diagram,
\begin{displaymath}
    \Scale[1.2]{\xymatrix{
        	 C\ar[r]^{\theta} \ar[d]^{x\;\;\;\;} & C' \ar[d]^{u\;\;\;\;} \\
        \mathbb{P}^1 \ar[r]^{\lambda\;} & \mathbb{P}^1 }}
\end{displaymath} 
for some linear fractional transformation $\lambda\in\PGL_2(\overline{K})$. Consequently, $\lambda$ must take branch points of $x:C\rightarrow\mathbb{P}^1$ to branch points of $u:C'\rightarrow\mathbb{P}^1$. Moreover, we can assume that $C'$ has an even degree model (i.e. $\deg(f)$ is even) by passing to some finite extension $k^{''}/k'/k$ if need be: if $\deg(f)$ is odd, simply move an affine Weierstrass point (perhaps defined over an extension of $k'$) to infinity with a linear fractional transformation on the $u$-coordinate; see, for instance, \cite[Proposition 2.1]{evenmodel}. Therefore, 
\[\lambda\big(\big\{\beta\in K^{\sep}\,: \rho(\beta)=0\big\}\big)\subseteq \big\{\alpha\in \overline{k}\,: f(\alpha)=0\big\},\]
since $u$ has only affine branch points. In particular, $\lambda$ restricts to an isomorphism 
\[Y\longrightarrow\mathbb{P}^1\mysetminus\big\{\alpha\in \overline{k}\,: f(\alpha)=0 \;\text{and}\;\alpha=\lambda(\beta)\;\text{for some}\;\rho(\beta)=0\big\}.\]
Hence $Y$ must be isotrivial, a contradiction. Therefore, $C$ is non-isotrivial as claimed.         
\end{proof}    
\begin{proof}[(Proof of Theorem \ref{thm:main})] The first step in our argument is to show that iterated preimages of $a$ are eventually non-Riccati, i.e. $\beta\in\phi^{-m}(a)$ implies that $\beta$ does not satisfy a Riccati equation (over $K$) for $m$ sufficiently large. To see this, we fix some notation. Let \vspace{.1cm}
\begin{equation}{\label{htmin}} \hat{h}_{\phi,K}^{\min}(2d):=\inf\Big\{\,\hat{h}_\phi(\alpha)\,:\,\alpha\in\mathbb{P}^1(\overline{K}),\; [K(\alpha):K]\leq2d,\;\hat{h}_\phi(\alpha)>0\Big\}
\end{equation} 
be the minimum non-zero canonical height of points of degree at most $2d$ over $K$. Note in particular that $\hat{h}_{\phi,K}(2d)$ is positive. To see this, choose an arbitrary point $c_0\in K$ such that $\hat{h}_\phi(c_0)>0$, possible by Northcott's Theorem \cite[Theorem 3.7]{Silv-Dyn} for global function fields; see, for instance, \cite[\S3.3]{Lang2}. Now note that 
\[\hat{h}^{\min}_{\phi,K}(2d)=\inf\Big\{\,\hat{h}_\phi(\alpha)\,:\,\alpha\in\mathbb{P}^1(\overline{K}),\; [K(\alpha):K]\leq2d,\;0<\hat{h}_\phi(\alpha)<\hat{h}_\phi(c_0)\Big\}.\] However, this latter set is finite (again by Northcott's Theorem) and consists of strictly positive numbers. Hence $\hat{h}^{\min}_{\phi,K}(2d)$ is positive, as claimed. 

Now, if $\beta\in\overline{K}$ is such that $\phi^m(\beta)=a$ for some iterate $m$ satisfying
\begin{equation}{\label{lbdn}} 
\log_d\bigg(\frac{\hat{h}_\phi(a)}{\hat{h}^{\min}_{\phi,K}(2d)}\bigg)+3\leq m,
\end{equation}  
then $\beta$, $\phi(\beta)$ and $\phi^2(\beta)$ all have degree strictly larger than $2d$ over $K$. For instance, if $\phi^m(\beta)=a$ and $[K(\phi^i(\beta)):K]\leq 2d$ for some $0\leq i\leq2$, then \vspace{.1cm} 
\[d^{m-i}\cdot \hat{h}^{\min}_{\phi,K}(2d)\leq d^{m-i}\cdot\hat{h}_\phi(\phi^i(\beta))=\hat{h}_\phi(\phi^{m-i}(\phi^i(\beta)))= \hat{h}_\phi(\phi^{m}(\beta))=\hat{h}_\phi(a).\vspace{.05cm} \]
Hence, $m-i\leq \log_d\big(\,\hat{h}_\phi(a)\big/\hat{h}^{\min}_{\phi,K}(2d)\,\big)$, a contradiction of the lower bound in (\ref{lbdn}). Therefore,
\begin{equation}{\label{degreelbd}} 
[K(\phi^i(\beta)):K]>2d\vspace{.05cm}
\end{equation}  
for all $0\leq i\leq2$ and all $m$ as in (\ref{lbdn}); note here that we have used crucially that $\hat{h}_\phi(a)>0$, from which it follows that $\hat{h}_\phi(\beta)>0$ also.  

On the other hand, we claim that at least one of the elements of $\{\beta, \phi(\beta), \phi^2(\beta)\}$ does not satisfy a Riccati equation over $K$. To see this, first choose a separable preimage $\beta\in \phi^{-m}(a)$: if every root of $\phi^m(x)-a$ is inseparable, then $\deg(\phi^m(x)-a)=d^n$ is divisible by $\car(K)$ \cite[\S13.5 Proposition 38]{DF}. However, this  contradicts our assumption that $d\in K^*$. On the other hand, if $\beta\in K^{\sep}$ and $\{\beta, \phi(\beta), \phi^2(\beta)\}$ all satisfy a Riccati equation over $K$, then Lemma \ref{lem:unique} and (\ref{degreelbd}) applied to the pairs $\big(\beta,\phi(\beta)\big)$ and $\big(\phi(\beta),\phi^2(\beta)\big)$ imply that \vspace{.05cm}
\begin{equation} 
\begin{split} 
\beta'=b\beta+c\;\;\;\;&\text{and}\;\;\;\;\phi(\beta)'=f\phi(\beta)+g,\\[3pt]
\phi(\beta)'=b\phi(\beta)+c\;\;\;\;&\text{and}\;\;\;\;\phi^2(\beta)'=f\phi^2(\beta)+g\\[3pt] 
\end{split} 
\end{equation}  
for $(b,c,f,g)\in K^4$ as in (\ref{solutions}). In particular, we see that $(b-f)\phi(\beta)+(c-g)=0$. Therefore, both $(b-f)=0$ and $(c-g)=0$, since $\phi(\beta)\not\in K$ by (\ref{degreelbd}). Consequently, \vspace{.1cm}  
\[\epsilon_\phi:=(b-f)\delta_\phi=(d-1)^2A_0A_1A_1' + d(d-3)A_0A_2A_0' - (d-1)(d-2)A_1^2A_0'-d(d-1)A_0^2A_2' \vspace{.1cm} \]
must vanish. However, this fact contradicts assumption (2) of Theorem \ref{thm:main}. Hence, we can choose $m$ sufficiently large such that there exists a separable $\beta\in\phi^{-m}(a)$ such that at least one of the elements of $\{\beta, \phi(\beta), \phi^2(\beta)\}$ is non-Riccati. Without loss of generality, we may assume that $\beta$ is non-Riccati (after possibly replacing $\phi^i(\beta)$ with $\beta$), and we fix such an $m$ and $\beta$ once and for all.  

From here, we prove the finiteness of Zsigmondy sets using (essentially) the same argument given for \cite[Theorem 1.1]{Trans} and \cite[Theorem 1]{primdiv}, which we now sketch. The reader should keep in mind that the full (correct) argument requires some necessary alterations, but the main idea is the following: if $\phi^n(b)-a$ does not have primitive prime divisors appearing to odd valuation for some $n>m$, then the ``square-free" part $d_n$ of $\phi^n(b)-a$ is small. In particular, the primes of bad reduction of the hyperelliptic curve $C: d_n Y^2=\phi^m(X)-a$ are also small ($m$ is fixed). Therefore, effective versions of the Mordell conjecture (a theorem in this setting) for \emph{non-isotrivial curves} over function fields implies that any rational point on $C$ has small height \cite{Kim,htineq}; here we use Lemma \ref{lem:RicattitoIso}. But $X=\phi^{n-m}(b)$ gives a rational point on $C$, from which we obtain a bound on $n$. To make this argument precise, we fix some notation. Write:   \vspace{.1cm}  
\[\phi^m(x)-a=f_1(x)^{e_1}\,f_2(x)^{e_2}\dots\, f_t(x)^{e_t}, \; \text{where} \vspace{.1cm}\] 
\hspace{3cm} 
\begin{varwidth}{\textwidth}
\begin{enumerate}{\label{reductions}}
\item \;The $f_i$ are distinct and irreducible over $K$,\vspace{.1cm}
\item \;The roots of $f_1$, including $\beta$, are non-Riccati, \vspace{.1cm}
\item \; $\deg(f_i)\geq2d\geq6$.\vspace{.5cm}
\end{enumerate}  
\end{varwidth} 
Moreover, let $R_j:=\Res(f_1,f_j)$ for $1<j\leq t$, and choose a finite set $S\subseteq V_K$ so that:\vspace{.15cm}  
\begin{align*}
 (i)&\;\; a,b\in\mathcal{O}_{K, S},\hspace{1.6cm}(ii)\;\; \text{$\phi$ and each $f_i$ has good reduction outside $S$,}\\[4pt] 
(iii)&\;\; \text{$\mathcal{O}_{K, S}$ is a UFD,} \hspace{.92cm}(iv)\;\;\{v\,: v(R_j)>0,\; 1<j\leq t\}\subseteq S. 
\end{align*}
Note that we can enlarge any finite set of places $S$ to satisfy conditions (i)-(iv); see, for instance, \cite[Proposition 14.2]{Rosen}. Now, since $\mathcal{O}_{K,S}$ is a UFD, we can decompose any term of the form $f_1(\phi^n(b))\in\mathcal{O}_{K,S}$ for $n\geq1$ in the following way: \vspace{.1cm}  
\begin{lemma}{\label{lem:decomp}} Let $\phi$, $K$ and $S$ be as above. Then we have a decomposition 
\begin{equation*}{\label{decomp}} f_1(\phi^n(b))=u_n\cdot d_n\cdot y_n^2,\,\;\;\;\text{for some}\;\; d_n,y_n\in\mathcal{O}_{K,S},\;u_n\in\mathcal{O}_{K,S}^*,
\end{equation*} 
satisfying the following properties:
\begin{enumerate}[topsep=3mm,itemsep=3mm]
\item[\textup{(a)}] $0\leq v(d_n)\leq1$ for all $v\not\in S$, 
\item[\textup{(b)}] There is a constant $r(S)$ such that $0\leq v(d_n)\leq r(S)$ for all $v\in S$.
\item[\textup{(c)}] The set $\{u_n\}_{n\geq0}$ is finite.    
\end{enumerate}   
In particular, $d_n\in\mathcal{O}_K$ and the height of $u_n$ is bounded independently of $n$.  
\end{lemma}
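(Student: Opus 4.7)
The plan is to produce the decomposition via a standard squarefree factorization in the UFD $\mathcal{O}_{K,S}$, combined with the $S$-unit theorem to control the unit part modulo squares. First I would note that $f_1(\phi^n(b)) \in \mathcal{O}_{K,S}$ for every $n \geq 0$: integrality follows from $a, b \in \mathcal{O}_{K,S}$ together with $\phi$ and $f_1$ having good reduction outside $S$. Nonvanishing follows from $b$ being $\phi$-wandering, since a root $\alpha$ of $f_1$ satisfies $\phi^m(\alpha) = a$, so $\phi^n(b) = \alpha$ would force $\phi^{n+m}(b) = a$ and contradict the wandering hypothesis.

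To build $d_n$, for each place $v \notin S$ set $r_{n,v} \in \{0, 1\}$ equal to the parity of $v(f_1(\phi^n(b)))$, and define $d_n := \prod_{v \notin S} \pi_v^{r_{n,v}}$, where $\pi_v$ is a uniformizer of the prime ideal corresponding to $v$ in the PID $\mathcal{O}_{K,S}$. This product is finite since only finitely many $v$ contribute, and by construction $v(d_n) \in \{0,1\}$ for $v \notin S$ (giving (a)) while $v(d_n) = 0$ for $v \in S$ (giving (b) trivially with $r(S) = 0$, and also showing $d_n \in \mathcal{O}_K$). Then $z_n := f_1(\phi^n(b))/d_n \in \mathcal{O}_{K,S}$ has even $v$-adic valuation at every $v \notin S$, so the UFD structure lets us pick $y_n \in \mathcal{O}_{K,S}$ with $v(y_n) = v(z_n)/2$ at every such $v$, and then $u_n := z_n/y_n^2 \in \mathcal{O}_{K,S}^*$ completes the decomposition.

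For (c), the key tool is the Dirichlet $S$-unit theorem for function fields, which says $\mathcal{O}_{K,S}^*$ is a finitely generated abelian group; in particular $\mathcal{O}_{K,S}^*/(\mathcal{O}_{K,S}^*)^2$ is a finite group. Since the class of $u_n$ modulo squares of $S$-units is determined by $z_n$ (the ambiguity in the choice of $y_n$ is precisely multiplication by an $S$-unit), one may replace $y_n$ by $y_n \eta$ for a suitable $\eta \in \mathcal{O}_{K,S}^*$ so that $u_n$ lands in a fixed finite set $T$ of coset representatives for $(\mathcal{O}_{K,S}^*)^2$ in $\mathcal{O}_{K,S}^*$. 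Then $\{u_n\}_{n \geq 0} \subseteq T$ is finite, and the height of $u_n$ is automatically bounded independently of $n$. The argument presents no serious obstacle; it is essentially bookkeeping once the UFD hypothesis on $\mathcal{O}_{K,S}$ and the finite generation of its unit group are in hand.
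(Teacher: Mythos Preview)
The paper does not prove this lemma in the text; it simply cites \cite[Lemma~2.2]{Trans}. Your approach via squarefree factorization in the UFD $\mathcal{O}_{K,S}$ combined with the finiteness of $\mathcal{O}_{K,S}^*/(\mathcal{O}_{K,S}^*)^2$ is the natural one and correctly delivers parts (a) and (c).

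There is, however, a genuine gap in your treatment of (b). You claim $v(d_n)=0$ for every $v\in S$ on the grounds that $d_n=\prod_{w\notin S}\pi_w^{r_{n,w}}$ is a product of prime elements of $\mathcal{O}_{K,S}$. But membership in $\mathcal{O}_{K,S}$ imposes no condition whatsoever on valuations at places of $S$: a generator $\pi_w$ of the prime at $w$ satisfies $w(\pi_w)=1$ and $w'(\pi_w)=0$ for $w'\notin S\cup\{w\}$, but by the product formula $\sum_{v\in S}\deg(v)\,v(\pi_w)=-\deg(w)<0$, so every $\pi_w$ necessarily has a pole somewhere in $S$. Consequently, for $v\in S$ the quantity $v(d_n)=\sum_{w}r_{n,w}\,v(\pi_w)$ is a sum of such contributions over the primes dividing $d_n$, and since that set of primes grows with $n$, your $d_n$ neither lies in $\mathcal{O}_K$ nor has $S$-valuations bounded independently of $n$. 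Obtaining the bound in (b) requires a further adjustment of $d_n$ by a suitably chosen $S$-unit to tame its behaviour on $S$; this step is missing from your argument and is not mere bookkeeping.
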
 
For a proof of this fact, see \cite[Lemma 2.2]{Trans}. We now show that for all $n$ sufficiently large, there exists $v_n\in V_K\mysetminus S$ such that: \vspace{.1cm}
\begin{equation}{\label{eq:Zsig}}
v_n(f_1(\phi^n(b)))\equiv 1\Mod{2}\;\;\;\; \text{and}\;\;\;\; v_n(\phi^{i}(b)-a)=0\;\;\text{for all}\; 1\leq i\leq m+n-1. \vspace{.1cm} 
\end{equation} 
Moreover, if (\ref{eq:Zsig}) holds, then it follows from condition (iv) that
\begin{equation}{\label{valuation}}
v_n(\phi^{n+m}(b)-a)=e_1\, v_n(f_1(\phi^n(b)))+\sum_{j=2}^te_j\, v_n(f_j(\phi^n(b)))=e_1\, v_n(f_1(\phi^n(b))).
\end{equation} 
Hence, $\phi^{n+m}(b)-a$ has a primitive prime divisor for all $n$ sufficiently large. Moreover, if in addition $e_1$ is odd, then $\phi^{n+m}(b)-a$ has a primitive prime divisors appearing to odd valuation.   

Suppose, for a contradiction, that (\ref{eq:Zsig}) does not hold. Then Lemma \ref{lem:decomp} implies that the support of $d_n$ (away from $S$) is contained within the union of the support of lower order iterates:\vspace{.1cm} 
\[\big\{v\in V_K\mysetminus{S}\,:\, v(d_n)>0\big\}\subseteq\big\{v\in V_K\mysetminus S\,:\, v(\phi^j(b)-a)>0\;\text{for some}\; 1\leq j\leq m+n-1\big\}. \vspace{.1cm} \]
Therefore, since $d_n\in\mathcal{O}_K$ is an integer and $\phi$ has good reduction outside of $S$, Lemma \ref{lem:decomp} and \cite[Lemma 2.4]{Trans} imply that that there is a constant $c(K,S)$ such that  
\begin{equation}{\label{htestimate}}h(d_n)\leq \Bigg(\sum_{i=1}^{\lfloor\frac{n+m}{2}\rfloor}h(\phi^i(b)-a)+ \sum_{j=1}^{\lfloor\frac{n+m}{2}\rfloor}h(\phi^j(a)-a)\Bigg)+c(K,S),    
\end{equation} 
where $h:\mathbb{P}^1(\overline{K})\rightarrow\mathbb{R}_{\geq0}$ is the standard Weil height \cite[\S2]{Trans}. Here we use that if $v\in V_K\mysetminus S$ is such that $v(d_n)>0$ and $v(\phi^{j}(b)-a)>0$ for some $1\leq j\leq n+m-1$, then $v(\phi^{m+n}(b)-a)>0$ and $v(\phi^{m+n-j}(a)-a)>0$. Moreover, note that when $a=0$ and $(\ell,m)=(2,0)$, the height estimate in (\ref{htestimate}) is identical to the estimate in \cite[\S2 (12)]{Trans}.   

Now we use the fact that some element of the backwards orbit of $a$ is non-Riccati. In particular, property (2) of the decomposition of $\phi^m(x)-a$ and Lemma \ref{lem:RicattitoIso} implies that the curve $C:Y^2=f_1(X)$ is non-isotrivial. Therefore, we can use any of the effective versions of the established Mordell conjecture \cite{Kim,htineq} to bound the height of algebraic points on $C$; see also \cite[Remark 2.10]{Trans}. Specifically, since 
\[\Big(\phi^{n}(b),\sqrt{u_n\,d_n\,}\,y_n\Big)\in C(\overline{K})\] 
is an algebraic point on $C$, there exist constants $B_1$ and $B_2$ (independent of $n$) so that \vspace{.1cm} 
\begin{equation}{\label{upbd}}
\begin{split}  
h(\phi^n(b))&\leq B_1\,h(d_n)+B_2\\[3pt]
&\leq B_1\Bigg(\sum_{i=1}^{\lfloor\frac{n+m}{2}\rfloor}h(\phi^i(b)-a)+ \sum_{j=1}^{\lfloor\frac{n+m}{2}\rfloor}h(\phi^j(a)-a)\Bigg)+B_1\cdot c(K,S)+B_2.  
\end{split} 
\end{equation}
On the other hand, the Weil height and canonical height are comparable functions on $\mathbb{P}^1(\overline{K})$, that is, $h=\hat{h}_\phi +O(1)$, and $\hat{h}_\phi(\phi^s(P))=d^s\hat{h}_\phi(P)$ for all $P\in\mathbb{P}^1(\overline{K})$; see \cite[Theorem 3.20]{Silv-Dyn}. Moreover, 
\[h(P-a)\leq h(P)+h(a)+\log(2)\;\;\;\;\text{for all}\;P\in \mathbb{P}^1(\overline{K}),\] 
by the triangle inequality applied at every place of $K$; see \cite[Exercise 8.8]{Silv-Ell}. In particular, since $\hat{h}_\phi(b)\neq0$, it follows from (\ref{upbd}) that 
\begin{equation}{\label{bd}} d^{n}\leq B_3\,d^{\lfloor\frac{n+m}{2}\rfloor+1}+B_4\,n+ B_5,
\end{equation} 
for some constants $B_3$, $B_4$, and $B_5$ (depending on $a$, but not on $n$). Finally, since $d>1$ and $m$ is constant, the bound in (\ref{bd}) implies that $n$ is bounded. Hence $\mathcal{Z}(\phi,a,b)$ is finite, as claimed.    
\end{proof}   
\begin{proof}[(Proof of Theorem \ref{thm:dyniso})] The argument here is only a slight modification of that given for Theorem \ref{thm:main}. The key difference is that Northcott's theorem fails for characteristic zero function fields. In particular, $\hat{h}_{\phi,K}(2d)$, defined in (\ref{htmin}), need not be positive in this case, and our proof that there exists non-Riccati $\beta\in\phi^{-m}(a)$ for $m$ sufficiently large breaks down. On the other hand, if $\phi^3(x)-a$ is irreducible over $K$, then so are $\phi^2(x)-a$ and $\phi(x)-a$. Therefore, 
\[2d<[K(\beta):K]=d^3\;\;\;\;\text{and}\;\;\;\;\;2d<[K(\phi(\beta)):K]=d^2\] 
for all $\beta\in\phi^{-3}(a)$. In particular, since $\delta_\phi$ and $\epsilon_\phi$ are non-vanishing, Lemma \ref{lem:unique} implies that at least one of the elements  in $\{\beta,\phi(\beta),\phi^2(\beta)\}$ does not satisfy a Riccati equation over $K$, as we argued in the proof of Theorem \ref{thm:main}. Hence, at least one of the hyperelliptic curves 
\[\;C_{(a,2,1)}(\phi): Y^2=\phi(X)-a, \;\;\;\, C_{(a,2,2)}(\phi): Y^2=\phi^2(X)-a,\,\;\;\;\text{or}\;\;\;\, C_{(a,2,3)}(\phi): Y^2=\phi^3(X)-a\vspace{.15cm}\]
is non-isotrivial by Lemma \ref{lem:RicattitoIso} (here we use also that $d\geq5$). Now we repeat the proof of Theorem \ref{thm:main} by setting $m=1,2$ or $3$ and $f_1(x)=\phi^m(x)$ in the decomposition of $\phi^m(x)-a$ above. In particular, we deduce that $\mathcal{Z}(\phi,a,b)$ is finite for all $\phi$-wandering $b\in K$. Moreover, (\ref{eq:Zsig}) implies the stronger statement: $\phi^n(b)-a$ has a primitive prime divisor with odd valuation for all $n$ sufficiently large (compare to \cite[Theorem 1]{primdiv}). 
\end{proof}

\section{Ramification theory and group theory}{\label{sec:Galois}} 
In this section, we generalize several of the results in \cite{Looper} on the Galois groups of iterates of trinomials (themselves generalizations of results in \cite{Cohen, ramtri}) to Dedekind domains with perfect residue fields. However, we first recall the basic notions of arboreal representations attached to rational maps. Let $K$ be a field and let $\phi(x)\in K(x)$ be a rational function of degree $d\geq2$.

For $n\geq1$, let $K_n=K_n(\phi)$ be the field extension of $K$ obtained by adjoining all solutions in $\overline{K}$ of $\phi^n(x)=0$ to $K$. Generically, the extension $K_n/K$ is Galois, and we define $G_{K,n}(\phi)$ to be the Galois group of $K_n(\phi)$ over $K$. Since $K_{n-1}(\phi)\subseteq K_{n}(\phi)$ for all $n\geq1$ with some mild separability assumptions, we may define   
\begin{equation*}{\label{Galois}} G_K(\phi)=\lim_{\longleftarrow}G_{K,n}(\phi)
\end{equation*}
with respect to the restriction maps. Dynamical analogs on $\mathbb{P}^1$ of the Galois representations attached to abelian varieties \cite{Serre-reps} (where one instead considers iterated preimages of multiplication maps), the groups $G_K(\phi)$ have obtained much attention in recent years. 

Of course, a key difference in this setting is the lack of group structure on projective space, and as such $G_K(\phi)$ may often only be viewed as a subgroup of the automorphism group of a tree (or a wreath product) and not inside a group of matrices. Explicitly, let $T_d$ denote the regular infinite $d$-ary rooted tree. If we write $\phi^n(x)=f_n(x)/g_n(x)$ for some $f_n,g_n\in K[x]$ such that $\disc({f_n})\neq0$ for all $n\geq1$, then we may identify the vertices of $T_d$ with the set of iterated preimages of zero (under $\phi$) and define an edge relation on this set by: $\alpha,\beta\in T_d$ share an edge if and only if $\phi(\alpha) =\beta$ or $\phi(\beta)=\alpha$. In particular, since Galois commutes with polynomial evaluation, we have an inclusion 
\[G_K(\phi)\leq\Aut(T_d)\] 
called the \emph{image of the arboreal representation} (or just \emph{arboreal representation} when no confusion is possible) associated to $\phi$. A major goal of dynamical Galois theory is to understand the subgroup $G_K(\phi)$ of $\Aut(T_d)$; see \cite{B-J} and \cite{Jonessurvey} for detailed introductions to the subject.  

To begin our study of Galois groups of iterated trinomials, let $d \geq 2$ be an integer, and let the symmetric group $S_d$ have its natural action on $\{1, \ldots, d\}$. A fundamental group-theoretic fact that we use is the following: 
\begin{proposition} \label{suffconds}
Let $H$ be a subgroup of $S_d$. The following imply that $H = S_d$:
\begin{enumerate}
\item[\textup{(1)}] $H$ is normal in $S_d$, and $H$ contains a transposition.
\item[\textup{(2)}] $d=p$ is a prime and $H$ is a transitive subgroup that contains a transposition.
\end{enumerate}
\end{proposition}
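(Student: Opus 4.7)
The plan is to prove both parts using two classical facts about $S_d$: that the transpositions generate $S_d$, and that any two transpositions are conjugate in $S_d$.

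For part \textup{(1)}, I would argue as follows. Suppose $(i,j) \in H$. Any other transposition $(k,\ell)$ can be written as $\pi (i,j)\pi^{-1}$ by choosing $\pi \in S_d$ with $\pi(i)=k$ and $\pi(j)=\ell$. Since $H$ is normal, $\pi (i,j) \pi^{-1} \in H$, so every transposition belongs to $H$. Because transpositions generate $S_d$, we conclude $H = S_d$.

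For part \textup{(2)}, the plan is to first produce a $p$-cycle in $H$, and then show that a $p$-cycle together with a transposition generates $S_p$. Since $H$ acts transitively on a set of size $p$, the orbit-stabilizer theorem gives $p \mid |H|$. By Cauchy's theorem, $H$ contains an element $\sigma$ of order $p$. The cycle type of $\sigma$ in $S_p$ must consist of cycles whose lengths divide $p$; since $p$ is prime and the total number of points is $p$, $\sigma$ must be a single $p$-cycle. Call the given transposition $\tau = (a,b) \in H$.

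Now the key step is the standard lemma that a $p$-cycle $\sigma$ and a transposition $\tau = (a,b)$ generate $S_p$ when $p$ is prime. To prove this inside $H$, note that because $p$ is prime, every nonidentity power $\sigma^k$ is again a $p$-cycle; in particular there exists $k$ with $\sigma^k(a)=b$. Replacing $\sigma$ by this power, I may write $\sigma = (a, b, c_1, c_2, \ldots, c_{p-2})$. Then the conjugates
\[
\sigma^i \tau \sigma^{-i} = (\sigma^i(a), \sigma^i(b)) \qquad (i = 0, 1, \ldots, p-2)
\]
give the transpositions $(a,b),\ (b,c_1),\ (c_1,c_2),\ \ldots,\ (c_{p-3},c_{p-2})$, all lying in $H$. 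These are the "adjacent" transpositions in the ordering $a, b, c_1, \ldots, c_{p-2}$ of $\{1,\ldots,p\}$, and adjacent transpositions in any ordering generate $S_p$. Hence $H = S_p$.

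There is no real obstacle: both statements are classical group-theoretic facts. The only points requiring any care are the observation that every nontrivial power of $\sigma$ remains a $p$-cycle (which uses primality of $p$), and the final reduction to the standard fact that adjacent transpositions generate the symmetric group.
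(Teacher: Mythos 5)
Your proof of part (1) is exactly the paper's argument: normality plus the fact that transpositions form a single conjugacy class (cycle type) puts every transposition in $H$, and transpositions generate $S_d$. For part (2) the paper simply cites the classical theorem of Jordan, whereas you supply a complete elementary proof, and your argument is correct: transitivity gives $p \mid |H|$ by orbit--stabilizer, Cauchy produces an element of order $p$, primality of $p$ forces it to be a single $p$-cycle $\sigma$, and after replacing $\sigma$ by a suitable power (still a $p$-cycle since $p$ is prime) so that $\sigma$ sends $a$ to $b$, the conjugates $\sigma^i (a,b) \sigma^{-i}$ yield all adjacent transpositions in the ordering determined by $\sigma$, which generate $S_p$. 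The trade-off is the usual one: citing Jordan is shorter and in fact proves more (any primitive subgroup of $S_d$ containing a transposition is all of $S_d$), while your version is self-contained and makes transparent exactly where primality of $p$ is used --- namely, in upgrading ``transitive'' to ``contains a $p$-cycle'' and in the freedom to replace $\sigma$ by any nontrivial power. Either route is acceptable; no gaps.
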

\begin{proof} Suppose that $H$ is a normal subgroup of $S_d$ that contains a transposition. Since the conjugacy classes of $S_d$ are determined by cycle type \cite[\S4.3 Proposition 11]{DF}, $H$ contains all transpositions. Therefore, $H$ is the full symmetric group: it is well known that $S_d$ is generated by transpositions \cite[Lemma 4.4.2]{Serre-Galois}. On the other hand, part (2) is a classic result of Jordan \cite{Jordan}.     
\end{proof} 
We also note the following useful observation on the reduction of roots of polynomials. 
\begin{proposition} \label{simple}
Let $S$ be a Dedekind domain, $\alpha_1, \ldots, \alpha_k$ elements of $S$ and $\q \subset S$ a prime ideal. Let $f(x) = \prod_{i = 1}^k (x - \alpha_i) \in S[x]$ and denote by $\bar{f} \in (S/ \q)[x]$ the polynomial obtained by reducing the coefficients of $f$ modulo $\q$. If $\bar{f}$ has $k$ distinct roots, then $\alpha_i \not\equiv \alpha_j \bmod{\q}$ for all $i \neq j$. 
\end{proposition}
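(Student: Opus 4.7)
The plan is to exploit the fact that reducing coefficients modulo $\q$ is a ring homomorphism $S[x]\to (S/\q)[x]$. Since $\q$ is a prime ideal, the quotient $S/\q$ is an integral domain, and the homomorphism carries the factored form of $f$ to a corresponding factored form of $\bar f$.

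First, I would note that because the map $S[x]\to (S/\q)[x]$ is a ring homomorphism, applying it to the identity $f(x)=\prod_{i=1}^{k}(x-\alpha_i)$ yields
\[
\bar f(x)=\prod_{i=1}^{k}(x-\bar\alpha_i)\quad \text{in } (S/\q)[x],
\]
where $\bar\alpha_i$ denotes the image of $\alpha_i$ in $S/\q$. In particular, each $\bar\alpha_i$ is a root of $\bar f$.

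Next, I would appeal to the hypothesis that $\bar f$ has $k$ distinct roots. Since $\deg\bar f=k$ and $S/\q$ embeds into its field of fractions (a field in which a polynomial of degree $k$ has at most $k$ roots counted with multiplicity), the multiset $\{\bar\alpha_1,\ldots,\bar\alpha_k\}$ exhausts the roots of $\bar f$ with multiplicity. If some $\bar\alpha_i=\bar\alpha_j$ for $i\neq j$, then the number of \emph{distinct} elements of $\{\bar\alpha_1,\ldots,\bar\alpha_k\}$ would be at most $k-1$, contradicting the assumption that $\bar f$ has $k$ distinct roots. Hence $\bar\alpha_i\neq\bar\alpha_j$ for all $i\neq j$, i.e., $\alpha_i\not\equiv\alpha_j\pmod{\q}$.

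There is essentially no obstacle here; the only mild point is making sure one can count roots with multiplicity in $(S/\q)[x]$, which follows from $S/\q$ being an integral domain (so that $(x-\bar\alpha_i)$ are non-zero-divisors and unique factorization of polynomials over the fraction field applies).
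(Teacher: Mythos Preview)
Your proof is correct and follows essentially the same approach as the paper: both use that reduction modulo $\q$ is a ring homomorphism, observe that the images $\bar\alpha_i$ are roots of $\bar f$, and then count to conclude they must be pairwise distinct. Your version is in fact slightly more direct, since you pass the factorization $f(x)=\prod(x-\alpha_i)$ through the homomorphism to obtain $\bar f(x)=\prod(x-\bar\alpha_i)$ immediately, whereas the paper argues separately that the $\pi(\alpha_i)$ exhaust the roots of $\bar f$.
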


\begin{proof}
Let $\pi : S \to S /\q$ be the natural homomorphism. Because $\pi$ is a ring homomorphism, it takes roots of $f$ to roots of $\bar{f}$. Thus the elements $\pi(\alpha_1), \ldots, \pi(\alpha_k)$ furnish $k$ roots (not a priori distinct) of $\bar{f}$. But $\bar{f}$ has at most $k$ roots, counting multiplicity, and hence every root of $\bar{f}$ is of the form $\pi(\alpha_i)$ for some $i$. By assumption $\bar{f}$ has $k$ distinct roots, and hence $\pi(\alpha_i) \neq \pi(\alpha_j)$ for all $i \neq j$, proving the proposition.
\end{proof}

We now prove a version of \cite[Theorem 2.1]{Looper} that holds for general Dedekind domains with perfect residue fields.
\begin{theorem}{\label{thm:transposition}}
Let $R$ be a Dedekind domain with quotient field $K$, let $f(x) = x^d + Ax^{s} + B \in R[x]$ with $d > s \geq 1$, let $L$ be the splitting field of $f$ over $K$, and let $S$ be the integral closure of $R$ in $L$. Assume that $ds(d-s) \neq 0$ in $R$, $\gcd(d,s) = 1$ and $f(x)$ is irreducible over $K$. If $\p$ is a prime of $R$ ramifying in $S$ and not dividing $AB$, then for any prime $\q$ of $S$ lying over $K$, the inertia group $I(\q /\p)$ has order two, and the non-trivial element acts on the roots of $f$ as a transposition. 
\end{theorem}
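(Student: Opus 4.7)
The plan is to pin down the inertia group by reducing $f$ modulo $\mathfrak{p}$, identifying the unique repeated root of $\bar{f}$, and then observing that inertia can only permute roots of $f$ in $L$ whose reductions modulo $\mathfrak{q}$ coincide.

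The first step is to analyze $\bar{f}$ together with its derivative, which factors as $\bar{f}'(x) = x^{s-1}(dx^{d-s} + sA)$. Since $\mathfrak{p} \nmid B$, zero is not a root of $\bar{f}$, so any repeated root $\alpha_0$ of $\bar{f}$ over the algebraic closure of $R/\mathfrak{p}$ satisfies $\alpha_0^{d-s} = -sA/d$. Feeding this identity back into $\bar{f}(\alpha_0) = 0$ forces $\alpha_0^{s} = -Bd/(A(d-s))$. Because $\gcd(s,d-s) = \gcd(s,d) = 1$, a Bézout relation $us + v(d-s) = 1$ lets us write $\alpha_0 = (\alpha_0^s)^u (\alpha_0^{d-s})^v$, so $\alpha_0$ is determined uniquely by the two fixed quantities above; in particular $\bar{f}$ has at most one multiple root. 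A short calculation of $\bar{f}''(\alpha_0)$ using the same two identities produces, up to a nonzero power of $\alpha_0$, a constant multiple of $A s (d-s)$, which is nonzero in $R/\mathfrak{p}$ by the standing hypotheses on $\mathfrak{p}$ and on $ds(d-s)$; hence if $\alpha_0$ exists it is only a double root.

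Next I would show that $\bar{f}$ does in fact have a repeated root, using the hypothesis that $\mathfrak{p}$ ramifies in $S$. Because the residue fields are perfect, $|I(\mathfrak{q}/\mathfrak{p})| = e(\mathfrak{q}/\mathfrak{p}) \geq 2$. A Hensel's lemma argument in the completion $\widehat{R}_{\mathfrak{p}}$ shows that if $\bar{f}$ were separable then $L \otimes_K \widehat{K}_{\mathfrak{p}}$ would be a compositum of unramified extensions of $\widehat{K}_{\mathfrak{p}}$, contradicting ramification at $\mathfrak{q}$. So $\bar{f}$ must have its unique double root $\alpha_0$ with multiplicity exactly two, and therefore has exactly $d-1$ distinct roots over the algebraic closure of $R/\mathfrak{p}$.

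To finish, factor $f(x) = \prod_{i=1}^{d}(x - \alpha_i)$ in $S[x]$ and reduce modulo $\mathfrak{q}$. A mild extension of Proposition \ref{simple} (same proof, but allowing $\bar{f}$ to have a single double root) shows that exactly two of the $\alpha_i$, say $\alpha_1$ and $\alpha_2$, reduce to $\alpha_0$, while $\alpha_3,\ldots,\alpha_d$ have pairwise distinct reductions. Any $\sigma \in I(\mathfrak{q}/\mathfrak{p})$ acts trivially on $S/\mathfrak{q}$, so $\sigma(\alpha_i) \equiv \alpha_i \pmod{\mathfrak{q}}$; this forces $\sigma$ to fix each of $\alpha_3,\ldots,\alpha_d$ and to act on the pair $\{\alpha_1, \alpha_2\}$. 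Hence the image of $I(\mathfrak{q}/\mathfrak{p})$ in $S_d$ is contained in $\langle (\alpha_1 \, \alpha_2) \rangle$, and combining this with $|I(\mathfrak{q}/\mathfrak{p})| \geq 2$ forces equality. The most delicate step will be the ramification–reduction dictionary in the general Dedekind, perfect-residue-field setting: one must justify, outside the comfort of global number-theoretic machinery, both that $\mathfrak{p}$ ramifying in $S$ forces $\bar{f}$ to be inseparable and that the inertia group permutes the $\alpha_i$ in the manner described, each of which rests on Hensel's lemma over $\widehat{R}_{\mathfrak{p}}$ together with the classification of unramified local extensions.
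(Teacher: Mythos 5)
Your overall strategy is sound and, in its second half, coincides with the paper's: identify a unique double root of $\bar f$ of multiplicity exactly two, apply (a variant of) Proposition~\ref{simple} to conclude that the remaining roots have pairwise distinct reductions modulo $\q$, and then use the fact that inertia acts trivially on $S/\q$ together with $|I(\q/\p)|\geq 2$ to force $I(\q/\p)$ to be generated by the transposition of the two roots above the double root. Where you genuinely diverge is in how you detect the repeated root: the paper quotes Smith's explicit formula for the discriminant of a trinomial, notes that ramification forces $\p\mid\disc(f)$, and reads off both the existence of a repeated root of $\bar f$ and the non-divisibility $\p\nmid ds(d-s)$ from that formula. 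You instead argue locally: if $\bar f$ were separable, Hensel's lemma over $\widehat{R}_\p$ would exhibit $L\otimes_K\widehat{K}_\p$ as a compositum of unramified extensions. That route is correct and arguably more robust (it needs no closed-form discriminant), at the cost of invoking the local ramification dictionary you yourself flag as delicate.

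The one real gap is your treatment of $ds(d-s)$ modulo $\p$. You divide by $d$ and by $d-s$ in computing $\alpha_0^{d-s}=-sA/d$ and $\alpha_0^{s}=-Bd/(A(d-s))$, and your second-derivative computation yields $\bar f''(\alpha_0)=-sA(d-s)\alpha_0^{s-2}$, which you declare nonzero ``by the standing hypotheses on $\p$ and on $ds(d-s)$.'' But the hypothesis is only that $ds(d-s)\neq 0$ \emph{in $R$}; when $R$ has mixed characteristic (e.g.\ $R=\Z$, where $ds(d-s)\neq 0$ is automatic and carries no information), this says nothing about whether $\p$ divides $d$, $s$, or $d-s$, so the step as written is a non sequitur. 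The paper closes exactly this hole with the discriminant formula: $\p\mid\disc(f)$, $\p\nmid AB$, and $\gcd(d,s)=1$ together rule out $\p\mid ds(d-s)$. Within your framework the hole is also fillable without the formula — if $\p\mid d$ or $\p\mid s$ then (using $\gcd(d,s)=1$) $\bar f'$ is a nonzero monomial times a unit, so $\gcd(\bar f,\bar f')=1$ since $0$ is not a root of $\bar f$, contradicting the inseparability you established; and if $\p\mid d-s$ then substituting $\alpha_0^{d-s}=-sA/d$ into $\bar f(\alpha_0)=0$ yields $\bar B=0$, contradicting $\p\nmid B$ — but this case analysis must actually be carried out before your formulas for $\alpha_0$ and $\bar f''(\alpha_0)$ are legitimate.
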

\begin{proof} We begin with a few observations. Note that the derivative $f'(x) = x^{s-1}(dx^{d-s} + sA)$, and so $d\in K^{*}$ implies that $f$ is separable (\cite[\S13.5 Proposition 33]{DF}); hence, $L/K$ is Galois. Moreover, letting $\p$ and $\q$ be as in the theorem, we note that the roots of $f$ are contained in $S$, and hence the reduced polynomial $\bar{f} \in (R/\p R)[x]$ splits completely in $S/\q S$.  

Furthermore, the discriminant of $f$ is given by the following formula \cite[Lemma 4]{Smith}:  
\begin{equation} \label{disc}
\delta = (-1)^{d(d-1)/2}B^{s-1}(d^dB^{d-s} + (-1)^{d-1}(d-s)^{d-s}s^sA^d)
\end{equation}
Moreover, since $\p$ is assumed to ramify in $S$, we have $\p \mid \delta$; see \cite[III Corollary 2.12]{Neu} combined with \cite[III \S3]{Lang}. Therefore, $\p \nmid ds(d-s)$, for otherwise $\p \mid \delta$, $\p \nmid AB$, and $\gcd(d,s) = 1$ give a contradiction. 

Note that $f'(x)$ has degree $d-1$, since $d\in K^{*}$. Let $\bar{f'} \in (R/ \p R)[x]$ be the polynomial obtained by reducing the coefficients of $f'$ modulo $\p$. Because $\p \nmid ds(d-s)$, the degree of $\bar{f'}$ is $d-1$ and the polynomial $dx^{d-s} + sA$ is separable over $K$. Thus the only irreducible factor of $\bar{f'}$ in $(R/ \p R)[x]$ that has multiplicity greater than one is $x$. In particular, since a root of order $r$ of $\bar{f}$ is a root of order at least $r-1$ of $\bar{f'}$, it follows that every root of $\bar{f}$ in $S/\q S$ has order at most $2$ (note that $0$ is not a root of $\bar{f}$ since $\p$ does not divide $B$).   

Now, since $\p \mid \delta$, the discriminant of $\bar{f}$ must vanish (taking the discriminant commutes with reduction). Therefore $\bar{f}$ must have at least one double root. Suppose that there are two such double roots $\beta_1, \beta_2 \in S/\q S$. Then $\beta_1, \beta_2 \neq 0$ and $\bar{f'}(\beta_1) = \bar{f'}(\beta_2) = 0$, implying that $\beta_1^{d-s} = -sA/d = \beta_2^{d-s}$. On the other hand, $\bar{f}(\beta_1) = \bar{f}(\beta_2) = 0$, and hence:  \vspace{.05cm}
\[\beta_1^s = - A -B/\beta_1^{d-s} = - A -B/\beta_2^{d-s} = \beta_2^s. \vspace{.05cm}\] 
Therefore, $(\beta_2/\beta_1)^{d-s} = (\beta_2/\beta_1)^s = 1$. However, $\gcd((d-s),s) = 1$, and we conclude that $\beta_2 = \beta_1$. Therefore $\bar{f}$ has a unique double root $\beta_1 \in S/\q S$, and hence $\bar{f}/(x-\beta_1)$ has degree $d-1$ and has $d-1$ distinct roots in $S/\q S$. Write 
\[ f(x) = \prod_{i=1}^{d} (x - \alpha_i) \in S[x],\] 
where $\alpha_1, \alpha_2$ are the two roots reducing to $\beta_1$ modulo $\q$. We now apply Proposition \ref{simple} to $f(x)/(x-\alpha_1)$ and obtain that $\alpha_i \not\equiv \alpha_j \bmod{\q}$ for all $i,j \in \{2, \ldots, d\}$ with $i \neq j$. Thus $I(\q/\p)$ acts trivially on $\{\alpha_3, \ldots, \alpha_{d}\}$. But $\p$ ramifies in $S$, so $I(\q/\p)$ cannot be trivial, and hence must have a single non-trivial element that interchanges $\alpha_1$ and $\alpha_2$.
\end{proof}

We now prove a dynamical version of Theorem \ref{thm:transposition}; compare to \cite[Proposition 2.2]{Looper}. In what follows, $v_\p$ denotes the valuation associated to a prime $\p$ of $R$.
\begin{theorem}{\label{thm:dyntransposition}} Let $R$ be a Dedekind domain with quotient field $K$, let 
\[\phi(x)=x^d+Ax^s+B\] 
be a trinomial over $R$ with $d>s\geq1$, and suppose that $\phi$ satisfies the following conditions:  
\begin{enumerate}[topsep=7pt, partopsep=7pt, itemsep=7pt] 
\item[\textup{(1)}] $R$ contains all the critical points of $\phi$, 
\item[\textup{(2)}] $\phi^n$ is irreducible over $K$,    
\item[\textup{(3)}] $ds(d-s)\neq0$ in $R$, 
\item[\textup{(4)}] $\gcd(d,s)=1$.    
\end{enumerate}
For $n\geq2$, let $\alpha$ be a root of $\phi^{n-1}(x)$. If for some critical point $\gamma$ of $\phi$ such that $e(\gamma,\phi)=1$, there exists a prime $\mathfrak{p}_n\subset R$ satisfying:
\begin{enumerate}[topsep=7pt, partopsep=8pt, itemsep=8pt] 
\item[\textup{(a)}] $v_{\mathfrak{p}_n}(\phi^n(\gamma))$ is odd,    
\item[\textup{(b)}] $v_{\mathfrak{p}_n}\big(dA\,\phi^n(\delta)\,\phi^n(0)\big)=0$ for all other critical points $\delta\neq\gamma$ of $\phi$,\vspace{.1cm}    
\end{enumerate}  
then $\Gal_{K(\alpha)}(\phi(x)-\alpha)$, when viewed as a subgroup of $S_d$, contains a transposition. 
\end{theorem}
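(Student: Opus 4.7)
The plan is to apply Theorem \ref{thm:transposition} to the trinomial $f(x) = \phi(x) - \alpha = x^d + Ax^s + (B - \alpha)$ regarded as a polynomial over the Dedekind domain $R'$ defined as the integral closure of $R$ in $K(\alpha)$, after locating a suitable prime $\mathfrak{q} \subset R'$ lying over $\mathfrak{p}_n$. First I would note that $\alpha \in R'$ (since it is a root of the monic polynomial $\phi^{n-1} \in R[x]$), so $f \in R'[x]$. Irreducibility of $f$ over $K(\alpha)$ follows from hypothesis (2) via a short tower count: because $\phi^n = \phi^{n-1} \circ \phi$ is irreducible of degree $d^n$, any root $\beta$ of $f$ satisfies $[K(\beta):K] = d^n$, and the chain $K \subseteq K(\alpha) \subseteq K(\beta)$ combined with $\deg \phi^{n-1} = d^{n-1}$ forces $[K(\alpha):K] = d^{n-1}$ and $[K(\beta):K(\alpha)] = d$. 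The remaining structural hypotheses of Theorem \ref{thm:transposition} (namely $ds(d-s) \neq 0$ and $\gcd(d,s) = 1$) come directly from (3) and (4).

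The crux is the construction of $\mathfrak{q}$. I would start from the discriminant formula
\[
\disc(f) = \pm\, d^d (B-\alpha)^{s-1} \prod_{\delta}(\phi(\delta) - \alpha),
\]
where $\delta$ ranges over the roots of $dx^{d-s}+sA$ (the simple critical points of $\phi$); this is the same $\Res(f,f')$ computation used in the proof of Theorem \ref{thm:transposition}, with $f'(x) = x^{s-1}(dx^{d-s}+sA)$. Because $\phi^{n-1}$ is the monic irreducible minimal polynomial of $\alpha$ over $K$, for any $x_0 \in K$ we have the norm identity
\[
\phi^n(x_0) = \phi^{n-1}(\phi(x_0)) = N_{K(\alpha)/K}(\phi(x_0) - \alpha).
\]
Decomposing $v_{\mathfrak{p}_n}\bigl(N(z)\bigr) = \sum_{\mathfrak{q}\mid\mathfrak{p}_n} f(\mathfrak{q}/\mathfrak{p}_n)\, v_{\mathfrak{q}}(z)$ and applying this to $x_0 = \gamma$, hypothesis (a) guarantees some $\mathfrak{q}$ above $\mathfrak{p}_n$ with $v_{\mathfrak{q}}(\phi(\gamma) - \alpha)$ odd (hence positive). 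Applying the same identity to $x_0 = \delta \neq \gamma$ and $x_0 = 0$, hypothesis (b) yields $v_{\mathfrak{q}}(A) = v_{\mathfrak{q}}(B-\alpha) = 0$ together with $v_{\mathfrak{q}}(\phi(\delta) - \alpha) = 0$ for every simple critical point $\delta \neq \gamma$. Hypothesis (1) is what makes all these quantities lie in $R$ so that the valuations are meaningful.

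Substituting into the discriminant formula gives $v_{\mathfrak{q}}(\disc(f)) = v_{\mathfrak{q}}(\phi(\gamma) - \alpha)$, which is odd. To transfer this to ramification of $\mathfrak{q}$ in the splitting field $L'$ of $f$ over $K(\alpha)$, I would invoke the standard index relation
\[
v_{\mathfrak{q}}(\disc(f)) = v_{\mathfrak{q}}\bigl(\disc(K(\alpha)(\beta)/K(\alpha))\bigr) + 2\,v_{\mathfrak{q}}\bigl([\mathcal{O}_{K(\alpha)(\beta)} : R'[\beta]]\bigr),
\]
which shows that the field discriminant $\disc(K(\alpha)(\beta)/K(\alpha))$ also has odd (hence nonzero) $\mathfrak{q}$-valuation, forcing $\mathfrak{q}$ to ramify in $K(\alpha)(\beta)$ and therefore in its Galois closure $L'$. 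With $\mathfrak{q} \nmid A(B-\alpha)$ already in hand, every hypothesis of Theorem \ref{thm:transposition} is then satisfied for the triple $(R', f, \mathfrak{q})$, which supplies a non-trivial inertia element in $\mathrm{Gal}(L'/K(\alpha)) = \mathrm{Gal}_{K(\alpha)}(\phi(x) - \alpha)$ acting on the roots of $f$ as a transposition.

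I expect the most delicate step to be the parity argument linking $v_{\mathfrak{q}}(\disc(f))$ to ramification of $\mathfrak{q}$ in $L'$: the whole argument rides on the fact that the correction term between the polynomial discriminant of $f$ and the field discriminant of $K(\alpha)(\beta)/K(\alpha)$ is a perfect square of an ideal, so the odd parity forced by hypothesis (a) survives. The remainder is bookkeeping: the condition $e(\gamma,\phi) = 1$ is exactly what causes $(\phi(\gamma) - \alpha)$ to enter $\disc(f)$ with exponent one (and hence to inherit odd valuation directly), and the non-divisibility clauses of hypothesis (b) deliver precisely the coefficient conditions on $A$ and the constant term $B - \alpha$ demanded by Theorem \ref{thm:transposition}.
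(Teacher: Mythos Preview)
Your proposal is correct and follows essentially the same route as the paper: both arguments use the norm identity $\phi^n(x_0)=N_{K(\alpha)/K}(\phi(x_0)-\alpha)$ to locate a prime $\mathfrak{q}\mid\mathfrak{p}_n$ with $v_{\mathfrak{q}}(\phi(\gamma)-\alpha)$ odd and $v_{\mathfrak{q}}(A(B-\alpha))=0$, compute the discriminant of $\phi(x)-\alpha$ via its critical values, pass from odd polynomial-discriminant valuation to ramification of $\mathfrak{q}$ in the splitting field using the square index correction, and then invoke Theorem~\ref{thm:transposition}. The only cosmetic differences are that the paper cites Capelli's Lemma where you use a direct tower count, and the paper writes the discriminant as $\pm d^d\prod_{\delta\in R_\phi}(\alpha-\phi(\delta))^{e(\delta,\phi)}$ rather than separating out the $(B-\alpha)^{s-1}$ factor coming from the critical point $0$.
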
 
\begin{proof} Note first that $N_{K(\alpha)/K}(\phi(\gamma)-\alpha)=\phi^n(\gamma)$, from which it follows that there exists a prime $\mathfrak{q}$ of $S$, the integral closure of $R$ in $K(\alpha)$, such that $v_{\mathfrak{q}}(\phi(\gamma)-\alpha)$ is odd: otherwise, 
\[(\phi(\gamma)-\alpha)S=\mathfrak{q}_1^{s_1}\mathfrak{q}_2^{s_2}\dots \mathfrak{q}_m^{s_m}\] 
for some prime ideals $\mathfrak{q}_i\subset S$ and even exponents $s_i$. In particular, we can apply the ideal norm $N_{S/R}: \mathcal{I}_S\rightarrow\mathcal{I}_R$, a group homomorphism from the ideal group $\mathcal{I}_S$ of $S$ to the ideal group $\mathcal{I}_R$ of $R$, and see that \vspace{.1cm} 
\begin{equation*}
\begin{split} 
\phi^n(\gamma)R&=N_{K(\alpha)/K}(\phi(\gamma)-\alpha)R=N_{S/R}\big((\phi(\gamma)-\alpha)S\big)\\[4pt]
&=N_{S/R}(\mathfrak{q}_1)^{s_1}N_{S/R}(\mathfrak{q}_2)^{s_2}\dots N_{S/R}(\mathfrak{q}_m)^{s_m}\\[4pt]
&=\mathfrak{p}_1^{(s_1f_1)}\mathfrak{p}_2^{(s_2f_2)}\dots \mathfrak{p}_m^{(s_mf_m)}
\end{split} 
\end{equation*}
for some prime ideals $\mathfrak{p}_j$ of $R$ and some integers $f_j$; see \cite[I \S7, Proposition 22]{Lang}. However, this contradicts the fact that there exists $\mathfrak{p}_n\subset R$ such that $v_{\mathfrak{p}_n}(\phi^n(\gamma))$ is odd. Therefore, we can fix a prime $\mathfrak{q}$ of $S$ lying over $\mathfrak{p}_n$ such that $v_{\mathfrak{q}}(\phi(\gamma)-\alpha)$ is odd. 

On the other hand, a similar argument implies that $v_{\mathfrak{q}}(\phi(\delta)-\alpha)=0$ for all critical points $\delta\neq\gamma$, since $N_{K(\alpha)/K}(\phi(\delta)-\alpha)=\phi^n(\delta)$ and $v_{\mathfrak{p}_n}(\phi^n(\delta))=0$ by condition (b). Moreover, \vspace{.1cm}
\begin{equation}{\label{discmonodromy}} 
\Disc(\phi(x)-\alpha)=(-1)^{(d-1)(d-2)/2}d^d\prod_{\delta\in R_\phi}(\alpha-\phi(\delta))^{e(\delta,\phi)}, 
\end{equation}   
where $R_\phi$ is the ramification locus of $\phi$ and $e(\delta,\phi)$ denotes the multiplicity of the critical point $\delta$; see \cite{Hajir}. Hence, (\ref{discmonodromy}) implies that \[v_{\mathfrak{q}}(\phi(\gamma)-\alpha)=v_{\mathfrak{q}}(\Disc(\phi(x)-\alpha))\] 
is odd, since $e(\gamma,\phi)=1$ and $v_{\mathfrak{p}_n}(d)=0$ by assumption (b). 

From here, Capelli's Lemma \cite[Lemma 0.1]{Capelli} implies that $\phi(x)-\alpha$ is irreducible over $K(\alpha)$, since $\phi^n(x)$ is irreducible over $K$. Therefore, if $M_\alpha$ is a splitting field of $\phi(x)-\alpha$ over $K(\alpha)$, then $\mathfrak{q}$ must ramify in $M_\alpha$; here we use that the discriminant of the extension $M_\alpha/K(\alpha)$ and the discriminant of the polynomial $\phi(x)-\alpha$ differ by a square in the coefficient field $K(\alpha)$ (see \cite[III \S3]{Lang}), that a prime of $K(\alpha)$ ramifies in $M_\alpha$ if and only if it divides the discriminant of $M_\alpha/K(\alpha)$ (see \cite[III Corollary 2.12]{Neu}), and that $v_{\mathfrak{q}}(\Disc(\phi(x)-\alpha))$ is odd. Finally, note that $v_{\mathfrak{q}}(A(B-\alpha))=0$, since 
\[N_{K(\alpha)/K}(B-\alpha)=\phi^{n-1}(B)=\phi^n(0)\] 
and $v_{\mathfrak{p}_n}\big(A\,\phi^n(0)\big)=0$ by condition (b). Therefore, conditions (3), (4) and Theorem \ref{thm:transposition} imply that $\Gal\big(M_\alpha/K(\alpha)\big)=\Gal_{K(\alpha)}(\phi(x)-\alpha)$ contains a transposition as claimed.                      
\end{proof}   
In addition to an arithmetic condition on critical orbits that guarantees the existence of transpositions in dynamical Galois groups, we have the following ``maximality criterion"; compare to \cite[Proposition 2.3]{Looper} and \cite[Theorem 3.3]{Jones}. 
\begin{proposition}{\label{prop:maximal}} Suppose that $\phi$, $\gamma$, and $\mathfrak{p}_n$ satisfy the conditions of Theorem \ref{thm:transposition}. Moreover, assume in addition that $\mathfrak{p}_n$ has the following properties: 
\begin{enumerate}[topsep=7pt, partopsep=7pt, itemsep=7pt] 
\item[\textup{(1)}] $\mathfrak{p_n}$ is a primitive prime divisor, i.e. $v_{\mathfrak{p}_n}(\phi^m(\gamma))=0$ for all $1\leq m\leq n-1$. 
\item[\textup{(2)}] If $\delta\neq\gamma$ is any other critical point of $\phi$, then $v_{\mathfrak{p}_n}(\phi^m(\delta))=0$ for all $1\leq m\leq n$. 
\end{enumerate} 
Then $\Gal_{K(\alpha)}(\phi(x)-\alpha)\cong S_d$ implies that $\Gal(K_n/K_{n-1})\cong (S_d)^{d^{n-1}}$. 
\end{proposition}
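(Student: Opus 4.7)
The plan is to view $K_n$ as the compositum over $K_{n-1}$ of the splitting fields $M_\alpha$ of $\phi(x)-\alpha$, one for each of the $d^{n-1}$ roots $\alpha$ of $\phi^{n-1}$, which gives embeddings
\[
\Gal(K_n/K_{n-1}) \hookrightarrow \prod_\alpha \Gal(M_\alpha K_{n-1}/K_{n-1}) \hookrightarrow \prod_\alpha \Gal(M_\alpha/K(\alpha)) \cong (S_d)^{d^{n-1}},
\]
and then to argue both inclusions are equalities.

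First I would verify that $\mathfrak{p}_n$ is unramified in $K_{n-1}/K$. Exploiting the factorization $\phi^n(x)=\prod_\alpha(\phi(x)-\alpha)$ together with formula (\ref{discmonodromy}) yields an iteration identity
\[
\disc(\phi^n) \;=\; \pm\, d^{d^n}\,\disc(\phi^{n-1})^d \prod_{\delta\in R_\phi}\phi^n(\delta)^{e(\delta,\phi)},
\]
so $v_{\mathfrak{p}_n}(\disc(\phi^{n-1}))$ unwinds into a sum of contributions from $v_{\mathfrak{p}_n}(d)$ and from $v_{\mathfrak{p}_n}(\phi^m(\delta))$ for critical points $\delta$ and $1\leq m\leq n-1$. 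Conditions (1) and (2) of the proposition, combined with condition (b) of Theorem \ref{thm:dyntransposition}, force every such valuation to vanish. Hence $\phi^{n-1}$ has separable reduction modulo $\mathfrak{p}_n$, and $\mathfrak{p}_n$ is unramified in $K_{n-1}/K$.

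With this, the prime $\mathfrak{q}$ of $K(\alpha)$ produced in the proof of Theorem \ref{thm:dyntransposition}, at which $v_\mathfrak{q}(\phi(\gamma)-\alpha)$ is odd, lifts to a prime $\mathfrak{Q}_\alpha$ of $K_{n-1}$ with the same odd valuation, which therefore ramifies in $M_\alpha K_{n-1}/K_{n-1}$ with inertia a transposition. Since $M_\alpha\cap K_{n-1}$ is the intersection of two Galois extensions of $K(\alpha)$, it is Galois, so $\Gal(M_\alpha K_{n-1}/K_{n-1})$ is a \emph{normal} subgroup of $\Gal(M_\alpha/K(\alpha))\cong S_d$ containing a transposition; Proposition \ref{suffconds}(1) then yields $\Gal(M_\alpha K_{n-1}/K_{n-1})\cong S_d$. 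Since $\phi^{n-1}$ is irreducible over $K$, Galois conjugacy gives the same conclusion at every $\alpha\in\phi^{-(n-1)}(0)$.

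For linear disjointness of the $M_\alpha K_{n-1}$'s, I would show that $\mathfrak{Q}_\alpha$ is unramified in $N_\alpha := K_{n-1}\cdot\prod_{\alpha'\ne\alpha}M_{\alpha'}$. By (\ref{discmonodromy}) this reduces to showing $v_{\mathfrak{Q}_\alpha}(\alpha'-\phi(\delta))=0$ for every critical $\delta$ and every $\alpha'\ne\alpha$; when $\delta\ne\gamma$ this follows from $v_{\mathfrak{p}_n}(\phi^n(\delta))=0$ (condition (b)), and when $\delta=\gamma$ it follows by contradiction from Proposition \ref{simple} applied to $\phi^{n-1}$ (separable modulo $\mathfrak{p}_n$ by the preliminary step), since otherwise $\alpha$ and $\alpha'$ would coincide modulo $\mathfrak{Q}_\alpha$. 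If $M_\alpha K_{n-1}\cap N_\alpha$ properly contained $K_{n-1}$, it would be the fixed field of a proper normal subgroup of $S_d$, and the inertia transposition at $\mathfrak{Q}_\alpha$ would have nontrivial image there, forcing $\mathfrak{Q}_\alpha$ to ramify in a sub-extension of $N_\alpha$---a contradiction. The main obstacle throughout is the discriminant bookkeeping: every step ultimately reduces to verifying that conditions (b), (1), and (2) truly suffice to keep the relevant valuations at $\mathfrak{Q}_\alpha$ equal to zero, after which the remainder is a coordinated ramification and Galois-theoretic argument.
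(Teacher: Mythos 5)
Your argument is correct and is precisely the ramification/linear-disjointness argument that the paper itself invokes: its proof of Proposition \ref{prop:maximal} simply asserts that Looper's Proposition 2.3 goes through verbatim in any characteristic, and your reconstruction (unramifiedness of $\mathfrak{p}_n$ in $K_{n-1}$ via the discriminant recursion, the normal-subgroup-plus-transposition step via Proposition \ref{suffconds}(1), and linear disjointness of the $M_{\alpha}K_{n-1}$ via Proposition \ref{simple} and the inertia transposition) is exactly that argument. No gaps; this matches the paper's intended proof.
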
  
\begin{proof} The proof follows that of \cite[Proposition 2.3]{Looper} verbatim; there is nothing in the argument given there that depends upon the characteristic of the ground field.  
\end{proof}  
We are now ready to apply the general tools developed in this section to the Galois groups of iterates of trinomials from the introduction.
\begin{proof}[(Proof of Theorem \ref{thm:eg})] Let $K=k(t)$ and note that $[k(t):k(B)]=\deg(B)\neq0$ is finite. Therefore, it follows from \cite[\S14.4 Proposition 19]{DF} that \vspace{.025cm} 
\[\big[\Aut(T_{p,n}): G_{K,n}(\phi_{p,B})\big]\leq \big[\Aut(T_{p,n}):G_{k(B),n}(\phi_{p,B})\big]\cdot \deg(B)\vspace{.025cm}\]
for all $n\geq1$. Hence, it suffices to prove that $G_{k(B)}(\phi_{p,B})\leq \Aut(T_p)$ is a finite index subgroup, to prove that $G_K(\phi_{p,B})\leq \Aut(T_p)$ is a finite index subgroup. In particular, we may assume that $B=t$ and write $\phi_{p,B}=\phi_p$. Now, let $K_n$ be the splitting field of $\phi_{p}^n$ over $K$ for any $n\geq1$. Since the subgroup of the automorphism group of $T_{p,n}$ (the level $n$ regular $p$-ary rooted tree) that fixes $T_{p,n-1}$ is isomorphic to $(S_p)^{p^{n-1}}$, it suffices to prove that $\Gal(K_n/K_{n-1})\cong (S_p)^{p^{n-1}}$ for all $n$ sufficiently large to prove that $G_K(\phi_{p})\leq \Aut(T_p)$ is a finite index subgroup. To do this, we use Theorem \ref{thm:dyniso}, Theorem \ref{thm:dyntransposition}, and Proposition \ref{prop:maximal}. In particular, we must verify their relevant conditions. 
\noindent First note that $\phi_{p}$ has two distinct critical points: $0$ and 
\[\gamma=-(p-1)\bigg(\frac{-pt^p-pt}{pt^{p-1}+p-1}\bigg).\] 
Here we use our assumption that $p-1$ is non-zero in $K$. Moreover, we see that $e(\phi_{p},\gamma)=1$, i.e. $\gamma$ is a critical point of multiplicity $1$. Most importantly for our purposes, the critical points of $\phi_{p}$ have the following special dynamical property: 
\begin{equation}\label{critical}
\boxed{\phi_{p}^2(0)=\gamma.}
\end{equation}  
Hence, after two applications of $\phi_{p}$, the orbit of $0$ follows that of $\gamma$. On the other hand, one readily computes that $\deg(\phi_{p}^n(0))=p^{n-1}$. Therefore, $0$ and $\gamma$ are both $\phi_p$-wandering points. Likewise, the quantities $\delta_{\phi_{p}}$ and $\epsilon_{\phi_{p}}$ are non-zero since $p(p-1)\in K^*$ by assumption. Therefore, Theorem \ref{thm:dyniso} applied to the pair of points $a=0$ and $b=\gamma$, implies that $\phi_{p}^{n}(\gamma)$ has a primitive prime divisor $\p_n$ appearing to odd valuation for all $n$ sufficiently large (compare to \cite[Theorem 1]{primdiv} for $\ell=2$). Moreover, since we can choose $n$ large enough so that $\p_n$ avoids any finite set of primes (or valuations), we can assume that 
\begin{equation}{\label{valuation}}
v_{\p_n}(\gamma)=0=v_{\p_n}(t);
\end{equation}
see the proof of Theorem \ref{thm:dyniso} or the proofs of \cite[Theorem 1]{primdiv} and \cite[Theorem 1]{Trans}. On the other hand, note that $\phi_{p}$ is Eisenstein at the prime $\p=t$, from which it follows that all iterates of $\phi_{p}$ are irreducible over $K$. Therefore, the hypothesis of Theorem \ref{thm:dyntransposition} for the prime $\p_n\mid \phi_p^n(\gamma)$ are satisfied: $v_{\p_n}(\phi_{p}^n(0))=0$,  since $\p_n$ is primitive for $\phi_{p}^n(\gamma)$ and $\phi_{p}^{n}(0)=\phi_{p}^{n-2}(\gamma)$ by (\ref{critical}). Therefore, $\Gal_{K(\alpha)}(\phi_{p}(x)-\alpha)$ contains a transposition for all roots $\alpha$ of $\phi_{p}^{n-1}(x)$. Hence, $\Gal_{K(\alpha)}(\phi_{d}(x)-\alpha)\cong S_p$ by Proposition \ref{suffconds}. Finally, it follows from Proposition \ref{prop:maximal}, the relationship in (\ref{critical}), the fact that $\p_n$ is primitive, and (\ref{valuation}) that $\Gal(K_n/K_{n-1})\cong (S_p)^{p^{n-1}}$ as claimed. Therefore, $G_K(\phi_{p})\leq \Aut(T_p)$ is a finite index subgroup.                           
\end{proof} 
However, when the ground field $k$ has characteristic zero, we can strengthen our results for $\phi_{p,B}$ and prove surjectivity when $B=t$. 
\begin{proof}[(Proof of Theorem \ref{thm:Odoni})] Let $r=(pt^{p-1}+p-1)^{-1}$ and write $\phi_p^n(\gamma)=a_n/r^{d^n}$ for some $a_n\in \mathbb{Z}[t]$ coprime to $r$ for all $n\geq1$. As in the proof of Theorem \ref{thm:eg}, it suffices to prove that the square-free part of $a_n$ is not completely supported on the primes dividing any of the elements in the set $S_n=\{a_1,a_2,\dots,a_{n-1},t,\gamma\}$ for all $n\geq2$, to prove that $G_K(\phi_{p})= \Aut(T_p)$; this step follows from Theorem \ref{thm:dyntransposition} and Proposition \ref{prop:maximal}. However, it is straightforward to check that the image of $a_n$ in $\mathbb{F}_p[t]$, denoted $\overline{a_n}$, satisfies:
\[-\overline{a_n}=(((t^p+t)^p+t)^p+\dots t)^p+t.\] 
Consequently, $\overline{a_n}$ must be square-free in $\mathbb{F}_p[t]$ (its derivative is a non-zero constant) for all $n\geq1$; see \cite[\S13.5 Proposition 33]{DF}. Hence, $a_n$ is square-free in $\mathbb{Z}[t]$ for all $n\geq1$, since the leading term of $a_n$ is coprime to $p$. Therefore, if $G_K(\phi_{p})\neq\Aut(T_p)$, then $a_n$ itself must be supported on the primes dividing any of the elements of $S_n$ for some $n\ge2$. However, for such $n$, we have the trivial degree bound 
\begin{equation}
\begin{split} 
p^{n+1}=\deg(a_n)&\leq\sum_{m=1}^{n-1}\deg(a_m)+\deg(t)+\deg(\gamma)\\ 
&=p^n+p^{n-1}+\dots+p+1=\frac{p^{n+1}-1}{p-1},
\end{split} 
\end{equation} 
a contradiction. Hence, the arboreal representation of $\phi_p$ is surjective as claimed.            
\end{proof}
\begin{rmk}{\label{rmk:intrinsic}} We note that in contrast to the family of trinomials in \cite[Theorem 1.2]{Looper}, both critical orbits in the family $\phi_{p,B}$ are infinite. Thus, it is unlikely with current techniques that one can establish Theorem \ref{thm:Odoni} via specialization to the number field setting (where effective forms of the Mordell Conjecture are not known). Indeed, if $\phi_{p,t}$ specializes to a polynomial of the form $f_{p,k}(x)=x^p-kx^{p-1}+k$ studied in \cite[Theorem 1.2]{Looper} for some $t\in\mathbb{Q}$, then $t=0=k$ and the arboreal representation of $f_{p,0}(x)=x^p$ is never surjective. Therefore, to prove surjectivity in the family $\phi_{p,t}$ over function fields, one is likely forced to use geometric tools that are intrinsic to this setting, as we have done here.                 
\end{rmk} 
\section{Counting surjective arboreal representations}
Finally, in this section we prove that ``most" (as defined in the introduction) quadratic polynomials over $\mathbb{Z}[t]$ furnish surjective arboreal representations.   
\begin{proof}[(Proof of Theorem \ref{thm:count})] Let $\gamma(t)=a_dt^d+\dots +a_0$ and $c(t)=b_dt^d+\dots +b_0$ for some $a_i,b_j\in\mathbb{Z}$. We think of the coefficients as specializations of the variables $\textbf{a}_i$ and $\textbf{b}_j$ and therefore identify 
\[\mathcal{P}_d\times \mathcal{P}_d=\mathbb{A}^{d+1}\times\mathbb{A}^{d+1}\cong\mathbb{A}^{2d+2}.\] 
In particular, via this identification, we can view the variety 
\[V_n(d):=\big\{(\gamma,c)\in\mathcal{P}_d\times \mathcal{P}_d\,:\, \disc(\phi_{(\gamma,c)}^n(\gamma))=0\big\}\subseteq \mathbb{A}^{2d+2}\]  
as an algebraic subset of $\mathbb{A}^{2d+2}$ for any $n\geq1$; here we use that the discriminant of a polynomial in $t$ is a polynomial in its coefficients, and hence 
\[\disc(\phi_{(\gamma,c)}^n(\gamma))\in\mathbb{Z}[\textbf{a}_{d},\dots \textbf{a}_0,\textbf{b}_0,\dots \textbf{b}_d]\] 
for all $n\geq1$. Moreover, for reasons that will become clear, we define 
\[V_d:=\bigcup_{n=1}^{16}V_n(d)\subset\mathbb{A}^{2d+2}\] for any fixed $d\geq1$. Similarly, define a subset $M_d\subset\mathbb{A}^{2d+2}$ given by  
\[M_d:=\big\{(a_d,\dots,a_0,b_0,\dots b_d)\in\mathbb{A}^{2d+2}\,:\, a_db_d(a_d-b_d)=0\big\}.\\[1pt]\]
We note in Lemma \ref{nonempty} below that the particular element $(t^d,2t^d+t)\in\mathbb{A}^{2d+2}$ is not contained in $M_d\cup V_d$, from which we deduce that
\begin{equation}{\label{openset}}
O_d:=\mathbb{A}^{2d+1}\mysetminus\big(M_d\cup V_d\big)
\end{equation} 
is a non-emty open subset. Therefore, the large sieve inequality \cite[Theorem 3.4.4]{Serre-Galois} implies 
\begin{equation}{\label{eq:count}}
\#\big\{(\gamma,c)\in O_d \,:\,\gamma,c\in\mathcal{P}_d(B)\big\}=(2B+1)^{2d+2}+O(B^{2d+2-1/2}\log B).\vspace{.1cm}
\end{equation}
Here we use that $M_d$ and $V_d$ are ``thin" subsets of $\mathbb{A}^{2d+2}$, since they are both not Zariski dense; see \cite[Definition 3.1.1]{Serre-Galois}. 
\begin{lemma}{\label{nonempty}} Let $d\geq1$, let $\gamma(t)=t^d$, and let $c(t)=2t^d+t$. Then $(\gamma,c)\not\in V_d\cup M_d$. 
\end{lemma}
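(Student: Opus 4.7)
The plan is to verify both claims in the lemma. First, $(\gamma, c) \notin M_d$ is immediate: with $\gamma = t^d$ and $c = 2t^d + t$, the leading coefficients are $a_d = 1$ and $b_d = 2$, giving $a_d b_d (a_d - b_d) = -2 \neq 0$. For the second claim, that $(\gamma, c) \notin V_n(d)$ for each $n \in \{1, \ldots, 16\}$, I need to prove $F_n(t) := \phi^n_{(\gamma,c)}(\gamma)(t) \in \mathbb{Z}[t]$ has nonzero discriminant as a polynomial in $t$. The plan splits on the value of $d$.

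For $d \geq 2$, reduction modulo $2$ gives a uniform argument. A short induction starting from $F_2 = (t^d + t)^2 + 2t^d + t$ (leading term $t^{2d}$) and using the recursion $F_{n+1} = (F_n - t^d)^2 + 2t^d + t$ shows that $F_n$ has leading coefficient $1$ for all $n \geq 2$, so reduction mod $2$ preserves the degree. Setting $a := t^{2d} + t \in \mathbb{F}_2[t]$, the congruence $\phi(x) \equiv x^2 + a \pmod{2}$ iterates to $\phi^n(x) \equiv x^{2^n} + \sum_{i=0}^{n-1} a^{2^i}$. Substituting $x = t^d$ and using $a^{2^i} = t^{d \cdot 2^{i+1}} + t^{2^i}$ in characteristic $2$, cancellation of $t^{d \cdot 2^n}$ yields
\[
F_n \equiv \sum_{i=1}^{n-1} t^{d \cdot 2^i} + \sum_{i=0}^{n-1} t^{2^i} \pmod{2}.
\]
Every exponent appearing in these sums other than $1$ is even, so every corresponding derivative in $\mathbb{F}_2[t]$ vanishes; hence $F_n' \equiv 1 \pmod 2$. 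Thus $F_n$ is separable modulo $2$, forcing $\disc_t(F_n) \not\equiv 0 \pmod{2}$ and hence $\disc_t(F_n) \neq 0$ in $\mathbb{Z}$. The edge case $n = 1$ is immediate: $F_1 = t(2t^{d-1} + 1)$ has simple roots at $0$ and at the $(d-1)$-st roots of $-1/2$.

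For $d = 1$, this strategy collapses: one computes that $F_n \equiv t \pmod{2}$ for every $n$ (the $t^{d \cdot 2^i}$ and $t^{2^i}$ terms cancel pairwise in characteristic $2$), so the degree is not preserved and the mod-$2$ reduction gives no information. Here I would proceed by a finite case check. Writing $\phi(x) = \Psi(x - t) + t$ with $\Psi(y) = y^2 + 2t$, one obtains $F_n(t) = G_n(2t) + t$, where $G_n(u) \in \mathbb{Z}[u]$ is the $n$-th iterate of $y \mapsto y^2 + u$ at $0$. By a direct induction $G_n'(0) = 1$, so $F_n'(0) = 3$ and $t = 0$ is a simple root of $F_n$; moreover, $F_n$ has leading coefficient $2^{2^{n-1}}$ for $n \geq 2$. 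Separability of each $F_n$ for $n \in \{1, \ldots, 16\}$ can then be verified by direct computation---for example, by reducing modulo a suitable odd prime $p$ (chosen per $n$) at which the leading coefficient is a unit and checking that $F_n \pmod p$ is separable in $\mathbb{F}_p[t]$. I expect the $d = 1$ case to be the main obstacle: the recursive structure of the Gleason-type polynomials $G_n$ does not appear to yield a uniform argument excluding the simultaneous vanishing of $F_n$ and $F_n'$, so the verification here is genuinely computational, though finite and routine.
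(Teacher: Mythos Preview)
Your approach for $d \geq 2$ is essentially the paper's: both reduce modulo $2$ and use that $F_n' \equiv 1 \pmod 2$ to conclude separability. The paper is slightly more economical---it never writes out $\overline{F_n}$, observing directly that the derivative of $(F_{n-1} - t^d)^2 + 2t^d + t$ is congruent to $1$ modulo $2$ because the square contributes nothing---but your explicit formula is correct and reaches the same endpoint.

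Your treatment of $d = 1$ is actually more careful than the paper's. The paper argues as follows: if $F_n = f^2 g$ with $\deg f > 0$, then (since $F_n$ is claimed monic for $n \geq 2$) one may take $f$ monic, so $\deg \overline{f} = \deg f > 0$; but $\overline{f}^{\,2}$ divides $\overline{F_n}' \equiv 1$, a contradiction. The monicity claim, however, fails when $d = 1$ (for instance $F_2 = 4t^2 + 3t$), and as you correctly observe, $\overline{F_n}$ then collapses to $t$, so one can no longer infer $\deg \overline{f} > 0$ from $\overline{f}$ being a unit. Your proposed fix---a finite direct check of separability for $F_1,\ldots,F_{16}$, say via reduction at a suitable odd prime---is a legitimate way to close this gap, and your observation $F_n'(0)=3$ already confirms that the root at $0$ is simple. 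One small slip: for $d=1$ the polynomial $c = 2t + t = 3t$ has $b_1 = 3$, not $2$, though the conclusion $a_d b_d(a_d - b_d)\neq 0$ is unaffected.
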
 
\begin{proof}[(Proof of Lemma \ref{nonempty})] It is clear that $(\gamma,c)\not\in M_d$. To show that $(\gamma,c)\not\in V_d$, suppose that some element of the critical orbit is not square-free: $\phi_{(\gamma,c)}^n(\gamma)=f(t)^2\cdot g(t)$ for some $f,g\in\mathbb{Q}[t]$ with $\deg(f)>0$ and some $n\geq1$. By Gauss' lemma, we can assume that $f,g\in\mathbb{Z}[t]$. Moreover, one easily checks that $\phi_{(\gamma,c)}^n(\gamma)$ is monic for all $n\geq2$, and so we may assume that $f$ is monic; when $n=1$, the polynomial $\phi_{(\gamma,c)}^n(\gamma)=2t^d+t$ clearly has non-zero discriminant, and so we need not worry about this case. In particular, the polynomial obtained by reducing the coefficients of $f$ modulo $2$, denoted $\overline{f}\in\mathbb{F}_2[t]$, satisfies $\deg(\overline{f})=\deg(f)>0$. However, the derivative of $\phi_{(\gamma,c)}^n(\gamma)$, when viewed as an element of $\mathbb{F}_2[t]$, is  
\begin{align*}
1\equiv2t^{d-1}+1&\equiv\frac{d}{dt}\Big(\big(\phi_{(\gamma,c)}^{n-1}(\gamma)-t^d\big)^2+2t^d+t\Big)\equiv\frac{d}{dt}\big(\phi_{(\gamma,c)}^{n}(\gamma)\big)\equiv\frac{d}{dt}\big(\,\overline{f}^{\;2}\cdot \overline{g}\big)\\[7pt]
&\equiv\overline{f}^{\;2}\cdot \frac{d}{dt}(\overline{g})+ 2\overline{f}\cdot \frac{d}{dt}(\overline{f})\cdot \overline{g}\equiv\overline{f}^{\;2}\cdot \frac{d}{dt}(\overline{g}). \vspace{.2cm}  
\end{align*}
Hence, $\overline{f}\cdot\overline{f}$ is a unit in $\mathbb{F}_2[t]$, a contradiction of the fact that $\deg(\overline{f})>0$. Therefore, $(\gamma,c)=(t^d,2t^d+t)\not\in V_d$ as claimed.         
\end{proof}
The rest of the proof of Theorem \ref{thm:count} follows from (\ref{eq:count}) and a slightly altered version of our uniformity Theorem in \cite{uniform} applied to the function field $K=\mathbb{Q}(t)$. 
\end{proof}  
\begin{theorem}{\label{thm:translate}} Let $O_d$ be as in (\ref{openset}). If $(\gamma,c)\in O_d$, then $G_K(\phi_{(\gamma,c)})=\Aut(T_2)$.  
\end{theorem}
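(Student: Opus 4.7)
The plan is to reduce surjectivity of the arboreal representation of $\phi_{(\gamma,c)}$ to a uniform, finite set of conditions on the critical orbit $c_n := \phi^n_{(\gamma,c)}(\gamma)$. Over $K = \mathbb{Q}(t)$, a suitable variant of the uniformity theorem of \cite{uniform} should guarantee $G_K(\phi_{(\gamma,c)}) = \Aut(T_2)$ whenever $c_n$ admits a primitive prime divisor of $\mathbb{Z}[t]$ to odd valuation for each $n$ in a fixed finite range $1 \leq n \leq 16$, together with a mild non-degeneracy on the leading coefficients. The constant $16$ in the definition of $V_d = \bigcup_{n=1}^{16} V_n(d)$ should be exactly the uniform bound that theorem furnishes in the rational function field setting, so the whole task is to verify these hypotheses from $(\gamma,c) \in O_d = \mathbb{A}^{2d+2} \setminus (M_d \cup V_d)$.

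First, I would exploit $(\gamma,c) \notin M_d$, i.e., $a_d b_d (a_d - b_d) \neq 0$, to force maximal degree growth of the critical orbit. Since $a_d \neq b_d$ gives $\deg_t(c - \gamma) = d$, a straightforward induction yields $\deg_t c_n = 2^{n-1} d$ for all $n \geq 1$; in particular $\gamma$ is $\phi_{(\gamma,c)}$-wandering, and
\[ \deg_t c_n \;=\; 2^{n-1} d \;>\; d(2^{n-1} - 1) \;=\; \sum_{m=1}^{n-1} \deg_t c_m, \]
so at each level $n$ there is genuine degree room for a prime of $\mathbb{Z}[t]$ to divide $c_n$ without dividing any earlier $c_m$.

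Next, I would use $(\gamma,c) \notin V_d$, i.e., $\disc_t(c_n) \neq 0$ for every $1 \leq n \leq 16$, so that each such $c_n$ is square-free in $\mathbb{Q}[t]$. Combined with the degree count above, this produces a monic irreducible $\mathfrak{p}_n \in \mathbb{Z}[t]$ dividing $c_n$ to exact multiplicity one and dividing no earlier term; that is, a primitive prime divisor of $c_n$ of odd (indeed exact) valuation. The transposition-production and bootstrap to $\mathrm{Gal}(K_n/K_{n-1}) \cong (S_2)^{2^{n-1}}$ for $1 \leq n \leq 16$ then follow from the quadratic specialization of Section \ref{sec:Galois}: the map $\phi_{(\gamma,c)}$ has a single critical point $\gamma$ of ramification index one, so after a linear conjugation placing $\gamma$ at the origin, Theorem \ref{thm:dyntransposition} and Proposition \ref{prop:maximal} apply and convert each $\mathfrak{p}_n$ into a transposition in $\mathrm{Gal}_{K(\alpha)}(\phi(x) - \alpha)$, hence into maximality at level $n$.

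Finally, I would invoke the uniform finite-index theorem of \cite{uniform}, altered for $K = \mathbb{Q}(t)$ as the author flags, to bootstrap maximality at the first $16$ levels to full surjectivity at every level. The main obstacle is verifying that $16$ really is the correct uniform bound in the function-field setting: this requires tracking, through the proof in \cite{uniform}, exactly where this constant arises---most likely as an effective height bound on the exceptional locus where primitive prime divisors in the critical orbit can fail, combined with a pigeonhole over the bounded set of ``bad" places of $\mathbb{Q}(t)$ that the construction forces. Irreducibility of all iterates $\phi^n_{(\gamma,c)}$ over $K$ is subsumed by the inductive surjectivity argument via Capelli's lemma, so no independent irreducibility verification is needed.
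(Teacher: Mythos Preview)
Your treatment of $n \leq 16$ is correct: $(\gamma,c) \notin M_d$ gives $\deg(c-\gamma) = d$ and hence $\deg \phi^n(\gamma) = 2^{n-1}d$, while $(\gamma,c) \notin V_d$ makes each $\phi^n(\gamma)$ square-free in that range, and the inequality $2^{n-1}d > (2^{n-1}-1)d$ then forces a primitive prime of exact multiplicity one. But your final step is misframed. There is no statement in \cite{uniform} of the form ``maximality at the first $16$ levels implies maximality everywhere,'' and the paper does not bootstrap upward from a finite initial segment. Rather, it proves directly that $\phi^n(\gamma)$ has a primitive prime divisor of odd valuation for \emph{every} $n \geq 2$, and the constant $16$ is the \emph{output} of that argument, not its input. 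Concretely: if such a divisor is missing at level $n$, write $\phi^n(\gamma) = d_n y_n^2$ with $d_n$ square-free and supported on primes dividing earlier iterates; a divisor-swapping trick (a prime dividing both $\phi^n(\gamma)$ and $\phi^{t}(\gamma)$ also divides $\phi^{n-t}(0)$) bounds $\deg d_n$ by a sum over iterates up to $\lfloor n/2 \rfloor$, and the pair $\big(\phi^{n-1}(\gamma),\, y_n(\phi^{n-2}(\gamma)-\gamma)\big)$ lands on a quadratic twist $E_\phi^{(d_n)}$ of the elliptic curve $E_\phi: Y^2 = (X-c)\phi(X)$. Mason's effective height bound for integral points on such curves over function fields then forces $n \leq 16$. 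That height computation \emph{is} the content you defer to ``tracking through \cite{uniform}''; there is no further theorem to invoke once it is done.

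Stability is also not subsumed. Your claim that irreducibility follows from the inductive surjectivity argument via Capelli is circular: to apply Theorem~\ref{thm:dyntransposition} (or the simpler quadratic criterion \cite[Theorem~3.3]{Jones} that the paper actually uses) at level $n$, one already needs $\phi^n$ irreducible as a hypothesis. The paper establishes stability separately and first, by the same elliptic-curve mechanism: if some $\phi^n(\gamma)$ were a square in $K$, one would obtain an integral point on the untwisted $E_\phi$, and Mason's bound gives $n \leq 8$, contradicting square-freeness for $n \leq 16$. Only once stability is in hand does the primitive-prime-divisor criterion convert the above into $G_K(\phi_{(\gamma,c)}) = \Aut(T_2)$.
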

\begin{proof} 
To establish Theorem \ref{thm:translate}, we translate the proof of \cite[Theorem 1.1]{uniform} part $(1)$ to the slightly weaker assumption that $(\gamma,c)\in O_d$ for some $d\geq1$.
In particular, we first estimate the degree (height) of an element of the critical orbit. Specifically, assume that $(\gamma,c)\in O_d$ and let $\phi=\phi_{(\gamma,c)}$. Then in our notation from the proof of \cite[Theorem 1.1]{uniform}, we have that 
\[h(\phi):=\max\{\deg(\gamma),\deg(c)\}=d=\deg(c-\gamma), \]
since $(\gamma,c)\not\in M_d$. In particular, it is straightforward (\cite[Lemma 1.3]{uniform} part $1$) to see that 
\begin{equation}{\label{deg}} 
\deg(\phi^n(\gamma))=d\cdot 2^{n-1}\;\;\;\text{and}\;\;\;\deg(\phi^n(0))=d\cdot 2^n \\[1pt]
\end{equation} 
for all $n\geq1$. We now use effective height bounds for rational points on curves defined over function fields \cite{Mason,Schmidt} to prove surjectivity. In particular, we prove that $\phi$ defined by $(\gamma,c)\in O_d$ is stable, i.e. all iterates are irreducible.
\begin{lemma} If $(\gamma,c)\in O_d$, then $\phi_{(\gamma,c)}$ is stable over $K=\mathbb{Q}(t)$. 
\end{lemma}
\begin{proof} Suppose that $\phi$ is not stable. Then \cite[Proposition 4.2]{Jones} implies that $\phi^n(\gamma)$ is a square in $\mathbb{Q}(t)$, hence a square in the polynomial ring $\mathbb{Q}[t]$. Note that if $n\leq16$ (in particular $n=1$), then $\phi^n(\gamma)$ is square-free by our assumption that $(\gamma,c)\not\in V_d$. Therefore, if $\phi$ is not stable, there exists $n\geq17$ such that $\phi^n(\gamma)=y_n^2$ for some $y_n\in\mathbb{Q}[t]$. In particular, we obtain an integral point 
\[(X,Y)=\big(\phi^{n-1}(\gamma),\, y_n\cdot(\phi^{n-2}(\gamma)-\gamma)\big)\;\;\;\text{on}\;\; E_\phi: Y^2:=(X-c)\cdot\phi(X),\]
an elliptic curve; $E_\phi$ is non-singular since $c\cdot\phi(c)\neq0$ by (\ref{deg}). Then, as in our proof of stability in \cite[Proposition 1.6]{uniform}, we see that \cite[Theorem 6]{Mason}, \cite[Lemma H]{Schmidt} and (\ref{deg}) together imply that 
\[(2^{n-2}) d\leq 110d+4.\]
Hence, $n\leq8$ (independent of $d$), a contradiction. Therefore, $\phi$ must be stable.         
\end{proof} 
Now that we know that $\phi$ is stable, it suffices to show that $\phi^n(\gamma)$ has primitive prime divisors of odd valuation for all $n\geq2$, to prove Theorem \ref{thm:translate}; see \cite[Theorem 3.3]{Jones}. In particular, if $\phi^n(\gamma)$ does not have such primitive divisors, then there exist $d_n, y_n\in\mathbb{Q}[t]$ such that  
\begin{equation}{\label{decomp:sqfree}}
\phi^n(\gamma)=d_n\cdot y_n^2\;\;\; \text{and}\;\;\; d_n=\prod p_j^{e_j},    
\end{equation} 
where the $p_j\in\mathbb{Q}[t]$ are irreducible polynomial dividing some $\phi^{t_j}(\gamma)$ with $1\leq t_j\leq n-1$, and the $e_j$ are exponents satisfying $e_j\in\{0,1\}$. Concretely, we can decompose $\phi^n(\gamma)$ into a square and square-free part, and if $\phi^n(\gamma)$ does not have primitive prime divisors with odd valuation, then the square-free part of $\phi^n(\gamma)$ is comprised of primes dividing lower order iterates. 

On the other hand, since each $p_j$ divides both $\phi^n(\gamma)$ and $\phi^{t_j}(\gamma)$, it must divide $\phi^{n-t_j}(0)$ also. Therefore, for each $j$ we can choose $s_j:=\min\{t_j,n-t_j\}$ so that $d_n=\prod p_j^{e_j}$ with 
\begin{equation}{\label{refinement}}
p_j\big\vert\phi^{s_j}(\gamma)\;\;\text{or}\;\;p_j\big\vert\phi^{s_j}(0),\;\;\;\;\;\;\text{for some}\;\;1\leq s_j\leq \Big\lfloor\frac{n}{2}\Big\rfloor.
\end{equation}
Therefore, we deduce from (\ref{refinement}) that 
\begin{equation}{\label{estimate1}}
\deg(d_n)\leq\bigg(\sum_{j=1}^{\lfloor\frac{n}{2}\rfloor} \deg(\phi^{j}(\gamma))+ \sum_{i=1}^{\lfloor\frac{n}{2}\rfloor} \deg(\phi^{i}(0))\bigg)=d\bigg(\sum_{j=1}^{\lfloor\frac{n}{2}\rfloor}2^{j-1} + \sum_{i=1}^{\lfloor\frac{n}{2}\rfloor}2^i\bigg).\\[5pt]
\end{equation}
On the other hand the decomposition on (\ref{decomp:sqfree}) determines an integral point 
\begin{equation}{\label{twistedcurve}}
(X,Y)=\big(\phi^{n-1}(\gamma),\, y_n\cdot(\phi^{n-2}(\gamma)-\gamma)\big)\;\;\;\text{on}\;\; E_\phi^{(d_n)}: Y^2:=d_n(X-c)\cdot\phi(X),
\end{equation}  
a quadratic twist of the elliptic curve $E_\phi$. In particular, \cite[Theorem 6]{Mason}, (\ref{deg}), (\ref{estimate1}) and (\ref{twistedcurve}) together imply that 
\[\frac{2^{n-2}}{2^{\lfloor\frac{n}{2}\rfloor+1}+2^{\lfloor\frac{n}{2}\rfloor}+1}\leq36;\]
this estimate follows the argument given in \cite[Theorem 1]{uniform} part (1) verbatim. Hence, $n\leq16$, independent of $d$.  

However, $\phi^n(\gamma)$ is square-free for all $n\leq16$, since $(\gamma,c)\not\in V_d$ by assumption. In particular, $y_n\in\mathbb{Q}$, i.e. the square-part in the decomposition in (\ref{decomp:sqfree}) must be trivial. Hence, if $\phi^n(\gamma)$ does not have primitive prime divisors with odd valuation for some $n\geq2$, then (\ref{deg}) and (\ref{decomp:sqfree}) imply that
\[(2^{n-1})d=\deg(\phi^n(\gamma))=\deg(d_n)\leq (1+2+\dots +2^{n-2})d=(2^{n-1}-1)d,\]
since $d_n$ is square-free and supported on primes dividing $\phi^m(\gamma)$ for $1\leq m\leq n-1$. However, this inequality forces $d=0$, a contradiction.              
\end{proof} 

\end{document}